\documentclass[11pt]{amsart}

\usepackage{graphicx}
\usepackage[usenames]{color}
\usepackage{caption}
\usepackage{subcaption}
\usepackage{wrapfig}

\usepackage{verbatim}
\usepackage{url}

\captionsetup{justification=centering}
\setlength{\textwidth}{6.5in}     
\setlength{\oddsidemargin}{0in}   
\setlength{\evensidemargin}{0in}  
\setlength{\textheight}{8.5in}    
\setlength{\topmargin}{0in}       
\setlength{\headheight}{0.2in}      
\setlength{\headsep}{0.2in}         
\setlength{\footskip}{.5in}       
                    
 
\bibliographystyle{plain}                                              

\usepackage[centertags]{amsmath}
\usepackage{amsfonts}
\usepackage{amssymb}
\usepackage{amsthm}
\usepackage{newlfont}
\usepackage{url}

\theoremstyle{plain}
\newtheorem{thm}{Theorem}
\newtheorem{cor}[thm]{Corollary}

\newtheorem{lem}[thm]{Lemma}
\newtheorem{prop}[thm]{Proposition}

\theoremstyle{definition}
\newtheorem{defn}{Definition}
\theoremstyle{remark}

\numberwithin{equation}{subsection}
\numberwithin{thm}{section}
\numberwithin{defn}{section}

\newcommand{\defeq}{\mathrel{\mathop:}=}
\newcommand{\ccirc}{\kern0.5ex\vcenter{\hbox{$\scriptstyle\circ$}}\kern0.5ex}

\def\R{\mathbb R}
\def\Z{\mathbb Z}

\def\D{\mathbb D}

\def\Dinf{\partial ^\infty}

\begin{document}
\title[]{Link Obstruction to Riemannian smoothings of locally CAT(0) 4-manifolds}
{\hfil }
\author[]{Bakul Sathaye}

\begin{abstract} 
We extend the methods of Davis-Januszkiewicz-Lafont to provide a new obstruction to smooth Riemannian metric with non-positive sectional curvature. We construct examples of locally CAT(0) 4-manifolds $M$, whose universal covers satisfy isolated flats condition and contain 2-dimensional flats with the property that $\sqcup\ \Dinf F_i \hookrightarrow \Dinf \tilde M$ are non-trivial links that are not isotopic to any great circle link. Further, all the flats in $\tilde M$ are unknotted at infinity and yet $M$ does not have a Riemannian smoothing.
\end{abstract}
\subjclass{}

\keywords{}
\maketitle

\section{Introduction}
Riemannian manifolds with non-positive sectional curvature have been of interest 
for the rich interplay between their geometric, topological and dynamical properties, and powerful local-global properties like the Cartan-Hadamard theorem.
Gromov defined a notion of non-positive curvature for the larger class of geodesic metric spaces. A geodesic metric space is said to have non-positive curvature if for every point, there is a neighborhood such that the geodesic triangles in the neighborhood are ``no fatter'' than Euclidean triangles. These spaces satisfy results that are analogues of results for non-positively curved Riemannian manifolds.
	
We are interested in understanding the relationship between these two notions of curvature for manifolds. In particular, if $M$ is a closed manifold with a locally CAT(0) metric, we want to know if $M$ can support a Riemannian metric with non-positive sectional curvature.
	
In low dimensions, the class of manifolds that support a locally CAT(0) metric is the same as the class of manifolds that support a Riemannian metric of non-positive curvature. In dimension=2, this follows from the classification of surfaces, and 
in dimension=3, this is a consequence of Thurston's geometrization conjecture, which is now a theorem by Perelman and others. 
	
	For dimensions $\ge 5$, Davis and Januszkiewicz \cite{DJ} constructed examples of locally CAT(0) manifolds that do not support Riemannian metrics of non-positive sectional curvature. In fact, they showed that for each $n \ge 5$, there is a piecewise flat, non-positively curved closed manifold $M^n$ whose universal cover $\tilde M^n$ is not simply connected at infinity. They applied hyperbolization techniques to certain non-PL triangulations of $S^n$ to prove this result. This, in particular, means that the universal cover is not homeomorphic to $\R^n$, and by the Cartan-Hadamard theorem, the manifold cannot have a smooth non-positively curved Riemannian metric. 
 	
Recently, Davis, Januszkiewicz and Lafont \cite{DJL} dealt with the remaining case of dimension 4. Their techniques are different from the dimension $\ge 5$ case, and in fact, their example has universal cover  $\tilde M$ diffeomorphic to $\R^4$. 
To prove this, they construct a ``knottedness'' in the boundary of the manifold which gives an obstruction to non-positively curved Riemannian smoothings. 

The purpose of this article is to extend their method to provide new examples of locally CAT(0) 4-manifolds which do not support a non-positively curved Riemannian metric. We show that linking in the boundary of the manifold is another obstruction for a Riemannian smoothing. 
In particular, we prove
\newpage
\begin{thm}
There exists a $4$-dimensional closed manifold $M$ with the following properties:
\vspace{-1mm}
\begin{enumerate}
\item $M$ supports a locally CAT(0)-metric.
\item The boundary of its universal cover $\tilde M$ is homeomorphic to $S^3$, and $\tilde M$ is diffeomorphic to $\R^4$.
\item The maximal dimension of flats in $\tilde M$ is $2$, and the boundary of every $\Z^2$-periodic $2$-flat is a circle that is unknotted in $\Dinf \tilde M$. 
\item $M$ does not support a Riemannian metric of non-positive sectional curvature.
\end{enumerate}
\end{thm}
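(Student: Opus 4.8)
The plan is to produce $M$ by a hyperbolization/reflection construction in the spirit of \cite{DJ,DJL} that encodes a prescribed link at infinity, and then to deduce (4) by comparing the boundary-at-infinity picture of the piecewise-Euclidean metric with that of a hypothetical Riemannian one. I would begin by fixing a link $L=L_1\sqcup\cdots\sqcup L_k\subset S^3$ whose components $L_i$ are unknots but which is not isotopic to any collection of pairwise disjoint great circles; the elementary fact driving this choice is that two disjoint great circles in $S^3$ always form a Hopf link, so it suffices, for instance, to take a two-component link with linking number $\neq\pm1$ such as the Whitehead link, or more components with some non-Hopf pair. Starting from a complex built so that $L$ is recorded by a family of disjoint $2$-tori, I would construct a compact non-positively curved piecewise-Euclidean $4$-manifold-with-boundary $W$ containing totally geodesic flat square tori $T_1,\dots,T_k$, arranged so that after the construction is closed up the boundary circles $\Dinf\tilde T_i\subset\Dinf\tilde M$ form a link isotopic to $L$. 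Hyperbolizing $W$ away from the tori and then applying the reflection-group trick produces a closed locally CAT(0) $4$-manifold $M$ in which the only flats are the $\pi_1M$-translates of the $T_i$, and these satisfy the isolated flats condition.

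With the construction in hand, (1) is the routine link-condition check (Gromov's lemma) for the pieces entering the construction. For (2), $\tilde M$ is a contractible open $4$-manifold which the construction keeps simply connected at infinity, so $\Dinf\tilde M$ is a simply connected homology $3$-sphere and hence $S^3$, and $\tilde M$ is diffeomorphic to $\R^4$ by the argument of \cite{DJL}. For (3), the flats are exactly the $\pi_1 M$-translates of the $T_i$, so the maximal flat dimension is $2$, the isolated flats condition holds by construction, and each $\Dinf\tilde T_i$ is unknotted, being isotopic to $L_i$.

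For (4), suppose $M$ admitted a Riemannian metric $g$ of non-positive sectional curvature, and lift it to a Hadamard manifold $\tilde M_g$. By the flat torus theorem the $\Z^2$ subgroups carried by the $T_i$ are represented by totally geodesic $g$-flats $F_i$. Because the locally CAT(0) metric has isolated flats, the visual boundary is a quasi-isometry invariant (Hruska--Kleiner), so there is a $\pi_1M$-equivariant homeomorphism $\Dinf\tilde M_{\mathrm{CAT}(0)}\to\Dinf\tilde M_g$ of $3$-spheres carrying the flat link $L$ onto $\sqcup\,\Dinf F_i$. Thus it is enough to show that $\sqcup\,\Dinf F_i$ is isotopic to a great circle link, which will contradict the defining property of $L$. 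For a single totally geodesic flat $F$ in a Hadamard manifold the normal exponential map $\nu F\to\tilde M_g$ is a diffeomorphism (non-positive curvature rules out focal points), yielding Fermi coordinates in which $F=\R^2\times 0\subset\R^4$; identifying $\Dinf\tilde M_g$ with the unit tangent sphere at a point of $F$ then displays $\Dinf F$ as a great circle. The real task is to run this for all the $F_i$ simultaneously: since the flats are pairwise almost disjoint (isolated flats), the normal-bundle models around the circles $\Dinf F_i$ can be taken disjoint and isotoped at the same time, and a rescaling/compactness argument straightens the whole configuration, exhibiting $\sqcup\,\Dinf F_i$ as a great circle link. This contradiction shows no such $g$ exists.

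The step I expect to be the main obstacle is precisely this last one: passing from the single-flat normal-exponential picture to control of the isotopy class of the \emph{entire} link of flat boundaries. One has to show that almost-disjointness of the flats genuinely prevents new linking or knotting from being introduced when the local models are assembled, and one needs a compactness/limiting argument to make the straightened link consist of honest great circles; this is where the isolated flats hypothesis does its work. A second, more technical difficulty lies in the construction itself: arranging a piecewise-Euclidean non-positively curved $4$-manifold with isolated flats whose flats realize a chosen non-great-circle link with unknotted components, while keeping $\tilde M$ a manifold diffeomorphic to $\R^4$ and introducing no higher-dimensional flats.
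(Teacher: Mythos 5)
Your overall plan has the right shape—encode a link with unknotted components but pairwise linking $\neq\pm1$, pass to a cubical/reflection construction, transport the boundary link through the Hruska--Kleiner homeomorphism, and contradict the geometry of flats in a Riemannian comparison. But two of your steps hide the paper's actual work, and the final step you flag as the ``main obstacle'' is not merely hard: as you have framed it, it is impossible to complete, and the paper does something genuinely different there.

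First, in your verification of (3) you assert that each $\Dinf \tilde T_i$ is unknotted ``being isotopic to $L_i$.'' This is not automatic; it is the content of the paper's Theorem \ref{localglobal} and Corollary \ref{cor}, which occupy all of Section 3. The argument requires showing that every concentric sphere $S_x(r)$ in the cubulated universal cover meets each flat in an unknot (Lemma \ref{lem1}, proved by an induction over vertices that introduces connect summands as one crosses a vertex, and crucially uses additivity of genus so that ``unknot'' is preserved), that the geodesic retractions $\rho^r_s$ are near-homeomorphisms \emph{of pairs} (Lemma \ref{lem2}, requiring Munkres smoothing, the homogeneity lemma, and Martin--Rolfsen to push the image of $F\cap S_x(r)$ onto $F\cap S_x(s)$), and then an extension of Brown's inverse-limit theorem to pairs (Lemma \ref{thm for pairs}). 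Without this, there is no reason the isotopy type of $L$ should survive passage to the boundary at infinity.

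Second, and more seriously: in (4) you reduce to showing $\sqcup\,\Dinf F_i$ is a great circle link and then propose to prove it by simultaneously straightening normal-exponential coordinates around the flats. This cannot work when the pairwise linking number is $0$. In that case the flats $F_i'$ in the Riemannian Hadamard manifold are \emph{disjoint}, whereas any two disjoint great circles in $S^3$ link with number $\pm1$; so a disjoint-flats configuration with linking $0$ is not a great circle link, and no rescaling/compactness argument can make it one. The paper instead \emph{splits cases on the linking number}. If some $Lk(l_i,l_j)\neq 0,\pm1$, the corresponding flats intersect at a point $p$, the unit-tangent picture at $p$ gives a great circle link (linking $\pm1$), and Proposition \ref{linking number under near homeo} (invariance of linking numbers under near-homeomorphisms) yields the contradiction. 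If $Lk(l_i,l_j)=0$ (e.g.\ the Whitehead link), the flats are disjoint, and the paper picks a basepoint off the flats so that near spheres meet the flats in an \emph{unlink}; the inverse-limit machinery from Section 3 then forces $L'_\infty$ to be an unlink, contradicting non-triviality of $L$. Your proposal needs this dichotomy; trying to force a great circle link in both cases is a dead end, not merely a technical gap.

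A smaller point: the construction you sketch (``hyperbolizing $W$ away from the tori, then the reflection trick'') is not what the paper does. The paper takes the flag triangulation $\Sigma$ of $S^3$ with isolated squares of type $L$ from Theorem \ref{Sigma}, forms the cubical complex $P_\Sigma$ (links of vertices $\cong\Sigma$), and sets $M=P_\Sigma$. The isolated-squares hypothesis feeds directly into Caprace's relative-hyperbolicity criterion, which is what gives isolated flats and hence the Hruska--Kleiner boundary homeomorphism; and Stone's theorem gives $\tilde M\cong_{\mathrm{diff}}\R^4$. Your construction would need to re-verify each of these, and it is not clear a ``hyperbolization away from the tori'' retains the flag/isolated-squares combinatorics that the downstream argument relies on.
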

\vspace{-1mm}
These examples are indeed different from those in \cite{DJL}. Examples in \cite{DJL} have flats that are wild knots in the boundary, where as the examples given by the above theorem have flats that are all unknotted in the boundary. 
In particular, one can see that the obstruction comes from linking of the flats in the boundary, and not from knottendness. 

\vspace{4mm}

\noindent \textbf{Acknowledgements.}
I am grateful to my advisor, Jean-Fran\c{c}ois Lafont, for introducing me to the question addressed in this paper and for his support and guidance through the process.

\section{Background}


In this section we give a brief outline of the proof of the result by Davis-Januszkiewicz-Lafont \cite{DJL}. 
It states the following.
\begin{thm}\label{DJLthm}
There exists a $4$-dimensional closed manifold M with the following four properties:
\begin{enumerate}
\item $M$ supports a locally CAT(0)-metric.
\item $M$ is smoothable, and $\tilde M$ is diffeomorphic to $\R^4$.
\item $\pi_1(M)$ is not isomorphic to the fundamental group of any Riemannian manifold of non-positive sectional curvature.
\item If $K$ is any locally CAT(0)-manifold, then $M \times K$ is a locally CAT(0)-manifold which does not support any Riemannian metric of non-positive sectional curvature.
\end{enumerate}
\end{thm}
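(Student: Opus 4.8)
The plan is to separate the argument into a \emph{geometric constraint} obeyed by every non-positively curved Riemannian manifold and a \emph{construction} of a locally CAT(0) example that violates it. I begin with the constraint. Suppose $N$ is a closed Riemannian manifold of non-positive curvature with $\pi_1(N)\cong\Gamma\defeq\pi_1(M)$; since $M$ will be a closed aspherical $4$-manifold, $\Gamma$ is a $4$-dimensional Poincar\'e duality group, and as $N$ is closed aspherical, $H^*_c(\tilde N)\cong H^*(\Gamma;\Z\Gamma)$ forces $\dim N=4$, so $X\defeq\tilde N$ is a Hadamard $4$-manifold and $\Dinf X\cong S^3$. The subgroup $A\cong\Z^2\le\Gamma$ coming from the flat torus in $M$ preserves, by the Flat Torus Theorem, a $2$-flat $F\cong\R^2\subset X$. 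Fixing $p\in F$, the map assigning to each ideal point the initial velocity of the unit-speed geodesic ray from $p$ representing it is a \emph{homeomorphism} $\Dinf X\to S_pX\cong S^3$, and it carries $\Dinf F$ onto the round great circle $S_pX\cap T_pF$. Hence \emph{the ideal boundary of a $\Z^2$-periodic $2$-flat in a Hadamard $4$-manifold is an unknotted circle in $S^3$}, and the same conclusion holds for the abstract group $\Gamma$ once one knows the Riemannian model can be taken $4$-dimensional.

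For the construction I would apply a relative strict hyperbolization (Charney--Davis, in the relative form of Davis--Januszkiewicz--Weinberger and Belegradek) to a smooth compact $4$-manifold with boundary $(W,\partial W)$, where $\partial W$ is a single flat $3$-torus and $W$ is assembled from pieces that produce, in the universal cover near that boundary, Fox--Artin-type wild behavior (the analogue, relative to a torus, of the ``bad end'' that strict hyperbolization of a non-PL triangulation produces in dimensions $\ge 5$). Relative hyperbolization leaves $\partial W$ totally geodesic and flat while making the interior locally CAT(0); one then closes up by gluing $T^2\times D^2$ (flat $\times$ flat, with cone angle $2\pi$ along $\partial D^2$) along the toral boundary. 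The core $T^2\times\{0\}$ becomes a flat torus in $M$, which lifts to a $\Z^2$-periodic $2$-flat $F\subset\tilde M$, and the pieces of $W$ are chosen exactly so that the proper embedding $F\hookrightarrow\tilde M$ compactifies, along the CAT(0) ideal boundary, to a pair $(S^3,\Dinf F)$ with $\Dinf F$ a \emph{wild} knot. One then checks the remaining clauses: $M$ is smoothable (strict hyperbolization of a smooth manifold is smoothable and the gluing is along a codimension-one flat submanifold); $\tilde M$ is a contractible open $4$-manifold that is simply connected at infinity with $\Dinf\tilde M\cong S^3$, and, being built from standard pieces, is diffeomorphic to $\R^4$; and $\tilde M$ contains no $3$-flat, so $A\cong\Z^2$ records all the relevant flat data.

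Clauses (1)--(3) then follow by confronting the two ingredients: if $\Gamma$ were the fundamental group of a non-positively curved Riemannian manifold, the constraint would make the periodic $2$-flat's ideal boundary unknotted, whereas the construction makes it wildly knotted. The delicate point is that the ideal boundary, and even the compactification $\tilde M\cup\Dinf\tilde M$, a priori depends on the chosen metric (Croke--Kleiner), so one cannot simply identify the two $3$-spheres; this is handled by showing that the knot type of $\Dinf F$ is a metric-independent (proper-isotopy) invariant of how $A\cong\Z^2$ sits in $\tilde M$. Concretely: $A$ acts cocompactly on any of its invariant flats, so such a flat is a quasiflat, and, by uniqueness of the end of $\tilde M\cong\R^4$, it is well-defined up to proper isotopy; hence its ideal boundary carries a well-defined knot type, recovered for instance as the eventual knot type of $F$ in the boundary $3$-spheres of any exhaustion of $\tilde M\cong\R^4$ by tame $4$-balls. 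So the wild knot persists into the hypothetical Riemannian compactification, yielding the contradiction; in particular $M$ itself supports no non-positively curved Riemannian metric.

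Finally, clause (4): $M\times K$ is locally CAT(0) whenever $K$ is. If it carried a non-positively curved Riemannian metric $g$, then $X\defeq\widetilde{M\times K}$ with $g$ is Hadamard and $\pi_1\cong\Gamma\times\pi_1(K)$; using structural features of $\Gamma$ ensured by the construction (trivial center and non-splitting as a nontrivial direct product), the de Rham decomposition together with the Gromoll--Wolf / Lawson--Yau splitting theorems peel off the Euclidean factor absorbing the center and the factor carrying $\pi_1(K)$, leaving a Hadamard $4$-manifold $X_0$ on which a finite-index subgroup $\Gamma_0\le\Gamma$ acts geometrically. Since $\Gamma_0$ still contains a $\Z^2$ whose flat is wildly knotted at infinity (wildness passing to finite covers), this contradicts the constraint of the first paragraph. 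The main obstacle in all of this is the construction of the second paragraph: one must make the periodic flat \emph{wild enough at infinity} to block a Riemannian smoothing while keeping the ambient universal cover \emph{tame enough} to be a smooth, standard $\R^4$ with $S^3$ boundary --- exactly the subtlety that distinguishes the dimension-$4$ result from the cruder, not-simply-connected-at-infinity obstructions available in dimensions $\ge 5$.
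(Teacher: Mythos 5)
Your proposal diverges from the paper's route in two essential ways, and the second one contains a genuine gap.

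First, the construction. The paper does not use relative strict hyperbolization at all: it starts from a flag triangulation $\Sigma$ of $S^3$ of ``type $k$'' (flag, with isolated squares, the squares forming a nontrivial knot $k$ up to isotopy), and takes $M=P_\Sigma$, the Davis cubical complex whose vertex links are all canonically isomorphic to $\Sigma$. The flat torus $T^2$ is the subcomplex $P_\Box$ coming from the unique square, and the knottedness of $\Dinf F$ is then established directly by a $\pi_1$-computation on $\Dinf\tilde M\setminus\Dinf F$. Your hyperbolization-plus-$T^2\times D^2$-gluing sketch is a different mechanism, and as written it is not clearly locally CAT(0): a flat $D^2$ cannot have geodesic boundary (Gauss--Bonnet), so the product metric on $T^2\times D^2$ does not meet the totally geodesic flat $T^3=\partial W$ in a way that obviously satisfies the link condition along the seam, and you give no mechanism for why the resulting periodic flat would be wildly knotted at infinity. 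This part might be repairable, but it is not what the paper does and it is underspecified as stated.

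The real gap is your metric-independence step. You acknowledge the Croke--Kleiner problem and then propose to resolve it by arguing that a $\Z^2$-invariant quasiflat in $\tilde M\cong\R^4$ has a well-defined ``eventual knot type'' in exhausting $3$-spheres, transferable to the hypothetical Riemannian model. That argument does not work: a quasiflat that is not a geodesic flat need not intersect large metric spheres in tame circles at all, the alleged ``eventual knot type'' has no reason to stabilize, and even if it did you would still need to relate a property of the CAT(0) boundary of the cubical $\tilde M$ to a property of the visual boundary of the Riemannian $\tilde N$, which is exactly what Croke--Kleiner forbids in general. The paper handles this with a completely different mechanism: the isolated-squares condition on $\Sigma$ forces (via Caprace's criterion) the right-angled Coxeter group $\Gamma_\Sigma$ to be relatively hyperbolic with respect to rank-$2$ virtually abelian subgroups, hence $\tilde M$ has isolated flats, and Hruska--Kleiner then gives a $\Gamma$-equivariant homeomorphism $\Dinf\tilde M\to\Dinf\tilde N$ carrying $\Dinf F$ to the limit set of $\Z^2$, which is $\Dinf F'$. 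Without some substitute for this isolated-flats input, the contradiction between your ``geometric constraint'' and your ``construction'' cannot be run. Your clause-(4) discussion has the same dependence, since it again requires transferring knottedness at infinity across the de Rham factor.
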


To prove this, they start with a link $L \subset S^3$ and construct a triangulation of $S^3$ of type $L$. Recall that a simplicial complex is \textit{flag} provided every $k$-tuple of pairwise incident vertices spans a $(k-1)$-simplex ($k\ge 3$). We say a cyclically ordered 4-tuple of vertices $(v_1, v_2, v_3, v_4)$ in a simplicial complex forms a $square$ provided each consecutive pair of vertices gives an edge in the complex, but the pairs $(v_1, v_3)$ and $(v_2, v_4)$ do not determine an edge. A simpicial complex is said to have \textit{isolated squares} if each vertex of the complex lies in at most one square. 
A triangulation of $S^3$ of $type$ $L$ is a flag triangulation of $S^3$ with isolated squares, where the collection of squares forms the link $L$ up to isotopy. 
\begin{thm} \label{Sigma}
Let $L \subset S^3$ be a link in the 3-sphere. Then there exists a flag triangulation of $S^3$, with isolated squares, and with type the given link $L$. 
\end{thm}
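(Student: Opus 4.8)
The plan is to build the desired triangulation of $S^3$ in two stages: first produce a triangulation whose "square complex" realizes the given link $L$ as an embedded collection of disjoint polygonal circles, then subdivide/modify to make the triangulation flag and the squares isolated, all while preserving the isotopy type of $L$. Concretely, start with a PL embedding of the link $L$ into $S^3$ as a disjoint union of polygonal loops, each component a cycle of at least four edges, and extend this to a triangulation $T_0$ of $S^3$ in which each $L$-component appears as a full subcomplex (a subdivided circle). One then wants each edge $e=[v_i,v_{i+1}]$ along a component of $L$ to be "completed" to a square $(v_{i-1},v_i,v_{i+1},w)$ by introducing, if necessary, new vertices so that $v_{i-1}$ and $v_{i+1}$ are \emph{not} joined by an edge and the four vertices of the putative square bound no diagonal. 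The key recurring tool is the \emph{flag-no-square subdivision}: barycentrically subdividing (or iterating suitable derived subdivisions of) a simplicial complex yields a flag complex, and one can arrange by a further careful local subdivision that the only squares are the prescribed ones along $L$ and that no vertex lies on two of them.

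The key steps, in order, are: (1) Fix a PL link $L\subset S^3$ and realize it as a subcomplex $\Lambda$ of some triangulation $T_0$ of $S^3$, with each component of $\Lambda$ a simple closed edge-path. (2) Subdivide $T_0$ (away from the structure we want to keep) to make it flag; barycentric subdivision of any simplicial complex is flag, but it destroys the squares, so instead use a relative subdivision that is the identity near $\Lambda$ and flag elsewhere, or — following the Davis–Januszkiewicz hyperbolization/flagification philosophy — replace each simplex by a flag model rel its boundary. (3) Build the squares: for each edge of $\Lambda$, introduce the "fourth vertex" and the two new edges making a genuine square, taking care that the two endpoints of the edge are not already connected by a path of length $2$ forcing a diagonal — if they are, subdivide that diagonal edge first. (4) Achieve \emph{isolated squares}: if a vertex $v$ of $\Lambda$ ended up in two squares, perform a local subdivision near $v$ (e.g. star-subdivide one of the offending edges) to separate them; since $L$ is embedded, its components and consecutive edges only interact locally, so finitely many such moves suffice. (5) Check that after all subdivisions and local modifications the complex is still a triangulation of $S^3$ (subdivision does not change the underlying space), is still flag (subdivisions preserve flagness once achieved, and each new square-completion is checked by hand to introduce no empty simplex), and that the square complex is isotopic to $L$ (all moves are supported in small balls and carry $\Lambda$ along by an ambient isotopy).

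The main obstacle I expect is \textbf{step (4) combined with maintaining flagness}: making the squares isolated and simultaneously keeping the complex flag is a delicate bookkeeping problem, because a local subdivision introduced to separate two squares at a vertex can create a new empty $2$- or $3$-cycle (a new square or a non-filled triangle) somewhere nearby, and then fixing \emph{that} can disturb a square of $\Lambda$. The clean way around this is to do all the square-creation first in a \emph{single} controlled model — e.g. take a regular neighborhood of $L$, triangulate it as (subdivided circle) $\times$ (flag triangulation of $D^2$) so that the squares appear as $[\text{edge of }L]\times[\text{edge of }\partial D^2$-pattern$]$ in a uniform, automatically-isolated way — and then glue in a flag triangulation of the complementary handlebody region rel the (already flag) boundary, invoking that the boundary pattern extends to a flag triangulation of the complement because any flag triangulation of a sphere bounding a ball extends. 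The rest (that the underlying space is $S^3$, that $L$ is recovered up to isotopy) is then routine PL topology. A secondary, more technical point to watch is ensuring the "no diagonal" condition for each square and the "no other squares" condition globally; both are handled by making every edge that is not part of a prescribed square sufficiently subdivided so that its endpoints acquire no spurious common neighbor of the wrong kind, which is exactly what a barycentric (or finer derived) subdivision of the complementary region buys us.
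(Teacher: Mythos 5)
There is a genuine gap, and it is conceptual: you have misread what the ``squares'' in a triangulation of type $L$ are supposed to be. Recall the definitions in Section~2: a square is an empty $4$-cycle; \emph{isolated squares} means each vertex lies in at most one square, so distinct squares are vertex-disjoint; and ``type $L$'' means the union of the squares, as a $1$-complex in $S^3$, is isotopic to $L$. Together these force each component of $L$ to appear in the triangulation as a \emph{single} empty $4$-cycle, and the $4$-cycles coming from distinct components to be vertex-disjoint. Your step~(1) instead realizes each component of $\Lambda$ as a cycle of length $\ge 4$, and your step~(3) then manufactures one square $(v_{i-1},v_i,v_{i+1},w_i)$ per \emph{edge} of $\Lambda$. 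Consecutive squares along a component share the two vertices $v_i,v_{i+1}$, so the isolated-squares condition fails from the outset, and step~(4) cannot repair this: vertex-disjointness would force you to discard all but one square per component, and the surviving $4$-cycle on $\{v_{i-1},v_i,v_{i+1},w_i\}$ bears no controlled relation to the isotopy class of the component. Your ``clean'' alternative has the same defect: in a product triangulation $(\text{subdivided }S^1)\times(\text{triangulated }D^2)$ of a regular neighborhood, the $4$-cycles of the form $[\text{edge}]\times[\text{edge}]$ lie on the boundary torus, occur many times per component, and pairwise share vertices; none of them is the core circle, which is what must realize $L$.

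What is actually required is a triangulation of $S^3$ in which each component of $L$ is already represented by a single $4$-cycle, after which the work is to make everything \emph{else} flag and free of empty $4$-cycles, while keeping the designated $4$-cycles empty (no diagonals) and pairwise vertex-disjoint. This is the shape of the argument in Section~3 of~\cite{DJL}, which the present paper cites for this theorem. Several of your tools --- realizing $L$ as a full subcomplex, relative/derived subdivision, decomposing along a regular neighborhood of $L$ --- do reappear there, but they are deployed to protect one $4$-cycle per component and to destroy all stray empty $4$-cycles, not to create a square per edge of $L$.
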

\noindent The construction of such a triangulation is given in detail in Section 3 of \cite{DJL}.

\vspace{3mm}

Starting with a triangulation $\Sigma$ of $S^3$ of type $k$, where $k$ is a non-trivial knot, a locally CAT(0) 4-manifold is constructed. 

For a simplicial complex $K$ with vertex set $I=\{ 1, 2, \ldots, n\}$, one can construct a cubical subcomplex $P_K$ of $[-1,1]^I$ with the same vertex set and with the property that the link of each vertex, $lk_{P_K}(v)$, in $P_K$ is canonically isomorphic to $K$. Here, by the link of $v$ in $P_K$ we mean the set of unit vectors at $v$ that point into $P_K$. This cubical complex $P_K$ can be constructed in the following way.

Let $\mathcal S(K)$ denote the set of all $J \subset I$ such that $J $ is the set of vertices of some simplex $\sigma$ in $L$. $\mathcal S(K)$ is partially ordered by inclusion. Define $P_K$ to be the union of all faces of $[-1,1]^I$ that are a translate of $[-1,1]^J$ for some $J \in \mathcal S(K)$. Such a face will be called a \textit{face of type $J$}. The poset of cells of $P_K$ can be identified with the disjoint union $\underset{J \in \mathcal S(K)}{\coprod} (C_2)^I/(C_2)^J$, where $C_2$ denotes the cyclic group of order 2. $(C_2)^I$ acts simply transitively on the vertex set of $[-1,1]^I$ and transitively on the set of faces of any given type. The stabilizer of a face of type $J$ is the subgroup $(C_2)^J$ generated by $\{r_i\}_{i\in J}$. 
Also recall that one can associate to the 1-skeleton of $K$ a right angled Coxeter group $\Gamma_K$. The associated Davis complex is the universal cover $\tilde P_K$.

\vspace{2mm}
Let $P_\Sigma$ be a cube complex as described above for the triangulation $\Sigma$ of type $k$.
There is a natural piecewise Euclidean metric on $P_\Sigma$, obtained by making each $k$-dimensional face in the cubulation of $P_\Sigma$ isometric to $[-1, 1]^k \subset \R^k$. Using properties of Davis complexes in the right angled case, we see that $P_\Sigma$ has the following properties.
\begin{itemize}
\itemsep 2pt
\item Since $\Sigma$ is a smooth triangulation of $S^3$, $P_\Sigma$ is a smooth 4-manifold. 
\item Since $\Sigma$ is a flag complex, the piecewise flat metric on $P_\Sigma$ is locally CAT(0). 
\item The boundary at infinity, $\Dinf \tilde P_\Sigma$, is homeomorphic to $S^{3}$.
\item By construction, links of vertices in $P_\Sigma$ are simplicially isomorphic to $\Sigma$. 
\end{itemize}
Detailed description of the properties of the cube complex $P_\Sigma$ can be found in the book \cite{D}.


\vspace{2mm}
Define $M \defeq P_\Sigma$. From the above discussion it follows that $M$ has properties (1) and (2) of Theorem \ref{DJLthm}. The diffeomorphism of $\tilde M$ to $\R^4$ follows from a result by Stone \cite[Thm 1]{St}.

It remains to show that $M$ cannot have a Riemannian smoothing. 
Suppose there is a Riemannian manifold $M'$ with non-positive sectional curvature such that $\pi_1(M)$ is isomorphic to $\pi_1(M') $. Let $\Gamma \defeq \pi_1(M)$. The idea is to produce a $\Gamma$-equivariant homeomorphism $\phi : \partial ^\infty \tilde M \rightarrow \partial ^\infty \tilde M'$. Such a homeomorphism may not exist in general (examples by Croke and Kleiner \cite{CK}). However, in this case one can produce a homeomorphism using the following result by Hruska and Kleiner \cite[Cor 4.1.3 and Thm 4.1.8]{HK}:
\begin{thm} For a pair $X_1,X_2$ of CAT(0) spaces with geometric $G$-actions, if $X_1$ has isolated flats, then so does $X_2$, and there is a $G$-equivariant homeomorphism between $\partial ^\infty X_1$ and $\partial ^\infty X_2$. 
\end{thm}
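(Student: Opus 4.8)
The plan is to deduce both assertions from the Hruska--Kleiner characterization of the isolated flats condition in terms of relative hyperbolicity, together with the rigidity of the flat pieces. First I would invoke the following equivalence, valid for any CAT(0) space $X$ carrying a geometric $G$-action: $X$ has isolated flats if and only if $X$ satisfies the Relatively Thin Triangle Property, if and only if $G$ is hyperbolic relative to the collection $\mathcal P$ of stabilizers of the maximal flats of $X$ --- a finite family, up to conjugacy, of maximal virtually abelian subgroups of rank at least $2$. Since $X_1$ has isolated flats, $G$ is hyperbolic relative to such a $\mathcal P$, and this relatively hyperbolic structure is intrinsic to $G$ (the peripheral subgroups are precisely its maximal virtually-abelian-of-rank-$\ge 2$ subgroups), so the converse direction of the characterization applied to the geometric action on $X_2$ forces $X_2$ to have isolated flats as well. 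Moreover this pins down the maximal flats of $X_2$: each is periodic, stabilized by a conjugate of some $P_i\in\mathcal P$, and by the Flat Torus Theorem together with isolation (which makes distinct maximal flats have disjoint boundaries) each $P_i$ stabilizes a \emph{unique} maximal flat $F_i'\subset X_2$. Hence there is a canonical $G$-equivariant bijection $gF_i\leftrightarrow gF_i'$ between the maximal flats of $X_1$ and those of $X_2$. This handles the first half of the theorem.

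Next I would construct the boundary homeomorphism $\phi\colon \Dinf X_1\to\Dinf X_2$. Fix a coarsely $G$-equivariant quasi-isometry $h\colon X_1\to X_2$, obtained by comparing the two orbit maps of $G$. Write $\Dinf X_1=\mathcal C_1\sqcup\mathcal F_1$, where $\mathcal F_1=\bigsqcup_F\Dinf F$ is the disjoint union over maximal flats of their boundary spheres (disjoint by isolation) and $\mathcal C_1$ is the ``conical part'' of points not asymptotic to any flat; a crucial consequence of isolated flats is that such points are represented by contracting (Morse-type) geodesic rays. On $\mathcal C_1$ the map $h$ sends a contracting ray uniformly close to a unique geodesic ray of $X_2$, giving a well-defined injective map $\mathcal C_1\to\Dinf X_2$. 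On each $\Dinf F_i$ I would instead use rigidity of the flat pieces: the stabilizer $P_i$ acts properly and cocompactly by isometries on each of $F_i$ and $F_i'$, hence as a crystallographic group on each, so by Bieberbach's rigidity theorem the two actions are affinely conjugate; the conjugacy induces a $P_i$-equivariant homeomorphism $\Dinf F_i\to\Dinf F_i'$, and transporting this over the $G$-orbit of flats gives a $G$-equivariant homeomorphism $\mathcal F_1\to\mathcal F_2$. One checks these two partial definitions fit together consistently, i.e. that $h$ carries a geodesic ray lying in $F_i$ to a quasiray shadowed by $F_i'$.

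The substantive step --- and the one I expect to be the main obstacle --- is proving that the resulting bijection $\phi$ is actually a homeomorphism. This requires a workable description of the topology of $\Dinf X$ near a flat boundary $\Dinf F$: one must show that convergence $\xi_m\to\xi$ with $\xi\in\Dinf F$ (and the $\xi_m$ possibly in $\mathcal C_1$ or on other flats) is detected precisely by the corresponding geodesic rays fellow-travelling $F$ for longer and longer, and that this ``length of fellow-travelling'' is coarsely preserved by $h$ because isolated flats forces any quasigeodesic that shadows $F$ over a long interval to remain uniformly close to $F$. Carrying this through uniformly over all the flats (finitely many $G$-orbits), and simultaneously for $\phi^{-1}$ built from a quasi-inverse of $h$, is exactly where the isolated-flats hypothesis does its real work --- it rules out the boundary pathology exhibited by the Croke--Kleiner examples. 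Once $\phi$ is known to be a homeomorphism, $G$-equivariance is immediate, since passing to the boundary washes out the bounded errors in the coarsely equivariant map $h$. (An alternative organization of the same argument runs through Drut\c{u}--Sapir's theory of asymptotically tree-graded spaces, using that $X_1$ and $X_2$ are asymptotically tree-graded with respect to their flats and that a quasi-isometry between such spaces coarsely preserves the peripheral structure; but the delicate point, controlling the boundary near the flats, is the same.)
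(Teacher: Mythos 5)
This statement is not proved in the paper at all: it is a quoted theorem of Hruska and Kleiner, cited as \cite[Cor.~4.1.3 and Thm.~4.1.8]{HK}, and the paper simply applies it as a black box. There is therefore no ``paper's own proof'' to compare against, and what you have written is really a sketch of the Hruska--Kleiner argument itself.

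As such a sketch, the first half is accurate and complete: propagation of the isolated flats condition from $X_1$ to $X_2$ follows immediately by applying the equivalence (isolated flats $\Leftrightarrow$ $G$ relatively hyperbolic w.r.t.\ its maximal virtually abelian subgroups of rank $\geq 2$) once in each direction, since relative hyperbolicity is a property of $G$ alone. The second half is where you correctly flag the substantive difficulty --- continuity of the extended map on the boundary --- but you leave it as an acknowledged gap rather than closing it. In Hruska--Kleiner this is exactly the role of the \emph{Relative Fellow Traveller Property}, which is one of their three equivalent formulations of the isolated flats condition (alongside the relatively thin triangle property you mention and the relative hyperbolicity of $G$). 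It asserts that any two quasigeodesics with common endpoints fellow-travel relative to the flats, with constants depending only on the quasigeodesic constants, and this is precisely what makes a coarsely equivariant quasi-isometry $h\colon X_1 \to X_2$ induce a continuous (hence, by compactness, a homeo-) map on boundaries: a geodesic ray in $X_1$ maps to a quasigeodesic ray in $X_2$, and the RFTP controls how far that quasigeodesic can wander from the geodesic with the same endpoint at infinity, uniformly in the ``conical'' directions and modulo a controllable excursion along flats. So the phrase you write --- ``isolated flats forces any quasigeodesic that shadows $F$ over a long interval to remain uniformly close to $F$'' --- is the right idea, but the precise, quotable tool is the RFTP, and without it your argument does not actually establish continuity of $\phi$ or $\phi^{-1}$ near the flat boundary spheres. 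Your Bieberbach-rigidity step for matching up $\Dinf F_i$ with $\Dinf F_i'$ is also not how Hruska--Kleiner organize things (they do not need to treat the flat and non-flat parts of the boundary by separate mechanisms, because the RFTP handles both at once), though it is not wrong. In short: structurally sound outline, honest about where the work lies, but the key lemma that closes the acknowledged gap is missing.
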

To use this theorem one needs to show that either $\tilde M$ or $\tilde M'$ has isolated flats. By another result of Hruska and Kleiner \cite[Thm 1.2.1]{HK}, we have that if a group $G$ acts geometrically on a CAT(0) space $X$, then $X$ has isolated flats if and only if $G$ is relatively hyperbolic with respect to a collection of virtually abelian subgroups of rank $\ge 2$. 


There exists a geometric action of $\Gamma$ on $ \tilde M$ and we have that $\Gamma$ is a finite index subgroup of the right angled Coxeter group $\Gamma_\Sigma$. So it is enough to show that $\Gamma_\Sigma$ is relatively hyperbolic relative to a collection of virtually abelian subgroups of higher rank. By a criterion developed by Caprace \cite{C}, it suffices to show that $\Sigma$ contains no full subcomplex isomorphic to the suspension $\Sigma K$ of a subcomplex $K$ with 3 vertices which is either (a) the disjoint union of 3 points, or (b) the disjoint union of an edge and 1 point.
In both cases $\Sigma K$ will not have isolated squares. But since $\Sigma$ has isolated squares, $\Sigma$ cannot contain a subcomplex isomorphic to $\Sigma K$. This shows that there exists a $\Gamma$-equivariant homeomorphism $\phi: \partial ^\infty \tilde M \rightarrow \partial ^\infty \tilde M'$.

Now observe that $M$ contains a totally geodesic 2-dimensional flat torus $T^2$. To see this, recall that there is one square given by $k$ in the triangulation $\Sigma$. The cubical complex, $P_\Box$, corresponding to this square is given by $\Box \times \Box$ which is homeomorphic to $S^1 \times S^1 \cong T^2$, a flat torus. 
This gives the flat torus in $M$. For any vertex $v \in T^2 \subset M$, the link at $v$ in $P_\Box$ is $k$ (by properties of $P_\Sigma$) which is knotted in $\Sigma$. So $P_\Box$ is locally knotted in $P_\Sigma$, which means that the torus $T^2$ is locally knotted inside the ambient 4-dimensional manifold $M$. 
By lifting to the universal covers, there is a flat $F$ in $\tilde M$ which is locally knotted at lifts of the vertices in $T^2$. In the boundary at infinity, this gives an embedding of $\partial^\infty F \cong S^1$ into $\partial ^\infty \tilde M \cong S^3$. It turns out that the local knottedness propagates to the boundary at infinity, and this embedding defines a nontrivial knot in $\partial ^\infty \tilde M$. This is done by showing that $\pi_1(\Dinf \tilde M \setminus \Dinf F) \not \cong \Z$ (see Section 4 of \cite{DJL} for details).

Since $\partial ^\infty \tilde M \cong S^3 \cong \partial ^\infty \tilde M'$, $ \phi $ gives a homeomorphism from $S^3$ to $S^3$ which takes $\partial ^\infty F \cong S^1$ to a knotted copy of $S^1$ in $\partial^\infty \tilde M'$. 
The totally geodesic torus $T^2 \hookrightarrow M$ gives us a copy of $\Z^2 \cong \pi_1(T^2)$ in $\Gamma \cong \pi_1(M')$. By the flat torus theorem, there exists a $\Z^2$-periodic flat $F' \subset M'$, with the property that $\partial ^\infty F'$ coincides with the limit set of $\Z^2$. Hence, under the $\Gamma$-equivariant homeomorphism $\phi$, the image of $\partial ^\infty F$ is in fact $\partial ^\infty F'$.

Now, let $p$ be any point on the flat $F'$. The geodesic retraction $\rho' : \partial ^\infty \tilde M' \rightarrow T_p\tilde M'$, where $T_p\tilde M'$ is the unit tangent space at $p$, is a homeomorphism which takes the knotted subset $\partial ^\infty F'$ to the unknotted subset $T_pF'$ lying inside $T_p\tilde M'  \cong S^3$. This gives a contradiction to the existence of the Riemannian manifold $M'$.

\section{Local to Global}

In this section we show that if one were to start with a triangulation of $S^3$ of the type unknot, then the boundary of the flat from the Davis complex corresponding to the unknot is  also an unknot. 

\vspace{2mm}
Let $k$ be a knot in $S^3$, and $\Sigma$ be the triangulation of $S^3$ of type $k$. Let $M = P_\Sigma$ be the 4-manifold as described in Section 2. We know that there is a flat $F$ in the universal cover $\tilde M$ such that if $x \in F$ is a vertex in the cubulation of $\tilde M$, then the link of $x$ in $F$, $lk_F(x)$, is a copy of the knot $k$ in $lk_{\tilde M}(x) \cong S^3$. The geodesic retraction map $\rho : \Dinf \tilde M \rightarrow lk_{\tilde M}(x)$ maps $\Dinf F \subset \Dinf \tilde M$ to $ lk_F(x) \cong k$. 

\begin{thm}\label{localglobal} Let $M, F$ and $x$ be as described above. If $lk_F(x)$ is an unknot in $lk_{\tilde M}(x)$, then there is a homeomorphism of pairs $(\Dinf M, \Dinf F) \cong (lk_{\tilde M}(x), lk_F(x))$. In particular, the knot $\Dinf F \subset \Dinf \tilde M \cong S^3$ is trivial.
\end{thm}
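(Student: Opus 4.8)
The plan is to build the homeomorphism of pairs by combining a local analysis of $\tilde M$ near the flat $F$ with a global analysis of its complement, using the geodesic retraction $\rho\colon \Dinf\tilde M\to lk_{\tilde M}(x)$ as the comparison map. Recall that $\rho$ is a continuous surjection carrying $\Dinf F$ onto $lk_F(x)$; its only failure to be injective comes from boundary points $\xi$ for which the ray $[x,\xi)$ runs into the branching locus of $\tilde M$, i.e.\ a cell whose piecewise spherical link has circumference $>2\pi$, so that $[x,\xi)$ admits several extensions starting in direction $\rho(\xi)$. First I would carry out the local analysis: using that $\Sigma$ is a flag triangulation of $S^3$ with isolated squares and that $lk_F(x)$ is unknotted in $lk_{\tilde M}(x)\cong S^3$, describe a closed neighbourhood $N(F)$ of $F$ as a trivial disc bundle $F\times D^2$ over $F$, flat away from the zero section (the transverse ``sphere'' is a circle, so the fibres are metric cones on it). Since $F$ is convex and contains $x$, a ray $[x,\xi)$ with $\xi$ near $\Dinf F$ leaves $F$ at once and never returns, hence avoids the apex locus, the only branching set inside $N(F)$; so $\rho$ is injective near $\Dinf F$ and exhibits a neighbourhood of $\Dinf F$ in $\Dinf\tilde M$ as a tubular (solid torus) neighbourhood of $lk_F(x)$ in $lk_{\tilde M}(x)$.

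The second and main step is to transport the unknottedness of $lk_F(x)$ from the link sphere to the boundary sphere by understanding the complementary region $Z\defeq\tilde M\setminus\mathrm{int}\,N(F)$ at infinity. Because $lk_F(x)$ is an unknot in $\Sigma\cong S^3$, the closure of the complement of its regular neighbourhood is a solid torus $V'$, and combinatorially $Z$ is the part of $\tilde M$ lying ``over'' $V'$; the goal is to show that $\Dinf Z$ is again a solid torus glued to the solid torus neighbourhood of $\Dinf F$ along a common boundary torus, realising $\Dinf\tilde M$ as a union of two solid tori along their boundary. One approach is to cut $\tilde M$ along the reflection walls of $\Gamma_\Sigma$ meeting $F$, identify the resulting pieces with Davis complexes of the relevant link subgroups, and track the effect on boundaries at infinity — the point being that the combinatorics over $V'$ is of the same benign (unknotted) type as in Section~2 and contributes nothing knotted. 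Since $\Dinf\tilde M\cong S^3$ is then a union of two solid tori, the core $\Dinf F$ of one of them is unknotted, and the two solid-torus decompositions — of $lk_{\tilde M}(x)$ around $lk_F(x)$ and of $\Dinf\tilde M$ around $\Dinf F$ — can be matched to produce the homeomorphism of pairs.

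I expect the genuine obstacle to be this second step: controlling the topology of $\Dinf\tilde M$ near $\Dinf F$ finely enough to recognise the complementary region as an honest solid torus rather than a space with merely the right fundamental group. This really uses the hypothesis, since the flats in \cite{DJL} are \emph{wild} at infinity, so part of the work is to rule out wild behaviour of $\Dinf F$, which a bundle-type identification as above does automatically. If a direct identification proves awkward, a robust fallback mirrors the Davis--Januszkiewicz--Lafont computation: compute $\pi_1(\Dinf\tilde M\setminus\Dinf F)$ along the $N(F)\cup Z$ splitting and obtain $\Z$ (in place of their ``$\not\cong\Z$''), conclude by the loop theorem / Dehn's lemma that the circle $\Dinf F\subset S^3$ is an unknot, in particular tame, and then invoke uniqueness of the unknot and of its tubular neighbourhood in $S^3$ to upgrade this to the asserted homeomorphism of pairs.
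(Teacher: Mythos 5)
Your proposal follows a genuinely different route from the paper's. The paper never analyses $\Dinf\tilde M$ directly; it studies the metric spheres $S_x(r)$, proving by an induction on the vertices of $F$ crossed that $F\cap S_x(r)$ is unknotted in $S_x(r)$ for every $r>0$ (the inductive step uses that a connect sum of unknots is an unknot), then shows that the geodesic retractions $\rho^r_s\colon(S_x(r),F\cap S_x(r))\to(S_x(s),F\cap S_x(s))$ are near-homeomorphisms of pairs (via Munkres' diffeomorphism approximation, the Homogeneity Lemma, and Martin--Rolfsen's ``homotopic flat arcs are isotopic''), and finally feeds this into a pair version of Brown's inverse-limit theorem to conclude $(\Dinf\tilde M,\Dinf F)\cong(S^3,\text{unknot})$. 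Your plan is instead a top-down Heegaard-style decomposition of $\Dinf\tilde M$ into two solid tori; if it could be carried out it would give a more geometric picture, but it is a different argument.

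The genuine gap is in your fallback. The loop theorem/Dehn's lemma argument requires $\Dinf F$ to be a \emph{tame} circle: it produces a compressing disc for the boundary torus of a tubular neighbourhood of $\Dinf F$, and without prior tameness there is no such torus to begin with, so $\pi_1(\Dinf\tilde M\setminus\Dinf F)\cong\Z$ does not by itself yield unknottedness. Your phrase ``in particular tame'' puts the cart before the horse: tameness is an input to that argument, not a consequence of it. This is exactly the asymmetry that makes the unknotted case harder than DJL's knotted case: detecting knottedness via $\pi_1\not\cong\Z$ is insensitive to wildness, but detecting unknottedness via $\pi_1\cong\Z$ is not. Tameness would indeed follow from your main step~2 (the two-solid-tori splitting), but you leave that step as a program rather than a proof, and the step~1 identification $N(F)\cong F\times D^2$ also needs care at the vertices of $F$, where the normal slice is a cone on the simplicial link of the square $lk_F(x)$ inside $\Sigma$ and is not obviously a $2$-disc. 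The paper's inverse-limit construction gets tameness for free, because at every finite radius the pair $(S_x(r),F\cap S_x(r))$ is an honest tame piecewise-flat unknot, and Brown's theorem carries the homeomorphism type of the pair to the limit.
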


In order to prove this theorem, we first prove a few lemmas.

\begin{lem}\label{lem1} Let $M, F$ and $x$ be as described above. If $lk_F(x)$ is an unknot in $lk_{\tilde M}(x)$, then $F \cap S_x(r) \cong S^1$ is an unknot in the 3-sphere $S_x(r)$ for every $r >0$. 
\end{lem}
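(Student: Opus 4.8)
That $F\cap S_x(r)\cong S^1$ is immediate: $F$ is totally geodesic and isometric to $\R^2$ and contains $x$, so the induced and ambient distances from $x$ agree on $F$, and $F\cap S_x(r)$ is precisely the Euclidean circle of radius $r$ about $x$ inside $F$ — a connected $1$-manifold, unknotted \emph{in $F$}. What must be shown is that it is unknotted in the $3$-sphere $S_x(r)$. The plan is to prove that the homeomorphism type of the pair $(S_x(r),\,F\cap S_x(r))$ — together with the fact that $S_x(r)\cong S^3$ — is independent of $r>0$, by induction on the ``critical'' radii, i.e.\ those $r_0$ for which $S_x(r_0)$ contains a vertex of $\tilde M$; since $\tilde M$ is locally finite and $d_x$ is proper and convex, these form a discrete set $0<r_1<r_2<\cdots$ with each $S_x(r_i)$ meeting only finitely many vertices. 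For the base case $0<r<1$: then $\bar B_x(r)$ lies in the open star of $x$, which is isometric to the open Euclidean cone on the all-right-angled spherical complex $lk_{\tilde M}(x)\cong\Sigma$, with $\bar B_x(r)\cap F$ the subcone on $lk_F(x)$. Hence $(S_x(r),\,F\cap S_x(r))\cong(lk_{\tilde M}(x),\,lk_F(x))$, which is $(S^3,\text{unknot})$ by hypothesis.

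On a maximal open interval $(r_i,r_{i+1})$ of regular radii the pair does not change. The shell $\bar B_x(t)\setminus B_x(s)$ for $r_i<s\le t<r_{i+1}$ contains no vertex of $\tilde M$, so at each of its points $\tilde M$ is locally homeomorphic to $\R^4$ (the link of an edge, square, or $3$-cell of $\tilde M$ is a sphere, so these lower strata are manifold points), and on it $d_x$ is a convex function with no critical level — its only critical levels are the critical radii. A Morse-theoretic argument for $d_x$ therefore realizes the shell as a product $S_x(s)\times[s,t]$, which may be taken to carry $F$ to $(F\cap S_x(s))\times[s,t]$ because $F$ is totally geodesic (so $d_x|_F$ likewise has no critical level there, and one first builds the product on $F$ — concentric circles — and then extends). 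In particular $S_x(\tau)\cong S^3$ and the pair is constant for $\tau\in(r_i,r_{i+1})$.

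It remains to cross one critical radius $r_0$, i.e.\ to compare $r_0-\epsilon$ with $r_0+\epsilon$ for small $\epsilon$. The change in $S_x(\cdot)$ is supported in a small ball $B_y(\delta)$ about a vertex $y$ with $d(x,y)=r_0$; since $\tilde M$ is a $4$-manifold, $B_y(\delta)$ is a $4$-ball, identified with the cone on $lk_{\tilde M}(y)\cong\Sigma$, and it meets $S_x(r_0\mp\epsilon)$ in $3$-balls. If $y\notin F$, choose $B_y(\delta)$ disjoint from $F$, hence from the circle $F\cap S_x(r_0\mp\epsilon)$; then the modification does not touch the circle, so its knot type in $S^3$ is unchanged, and $S_x(\cdot)$ remains an $S^3$ since the modification only replaces a $3$-ball by a $3$-ball in its boundary. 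If $y\in F$, then by the properties of $P_\Sigma$ the pair $(lk_{\tilde M}(y),\,lk_F(y))$ is the \emph{same} pair $(\Sigma,\,lk_F(x))$ that occurs at $x$, so $lk_F(y)$ is unknotted in $\Sigma$; in the cone model $(B_y(\delta),\,F\cap B_y(\delta))\cong(\text{cone }\Sigma,\ \text{cone }lk_F(y))$ the arc $F\cap S_x(r_0\mp\epsilon)\cap B_y(\delta)$ is a short geodesic segment of the flat circle, hence essentially a diameter of $\text{cone }lk_F(y)$ nudged slightly off the cone point $y$. Using that $lk_F(y)$ is unknotted in $\Sigma$, one shows that the two arcs (for $r_0-\epsilon$ and for $r_0+\epsilon$) are isotopic rel endpoints inside the respective $3$-balls $S_x(r_0\mp\epsilon)\cap B_y(\delta)$; extending this isotopy by the identity and splicing with the product structure on the complement of $B_y(\delta)$ gives $(S_x(r_0-\epsilon),\,F\cap S_x(r_0-\epsilon))\cong(S_x(r_0+\epsilon),\,F\cap S_x(r_0+\epsilon))$. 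Induction then proves the lemma.

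The main obstacle is precisely the case $y\in F$: upgrading ``$lk_F(y)$ is unknotted in $\Sigma$'' to an ambient isotopy, rel boundary, of the arc inside the $3$-ball $S_x(\cdot)\cap B_y(\delta)$ as $S_x(\cdot)$ sweeps across $y$. Carrying this out requires a careful description of the metric cone on $\Sigma$ near the direction $\xi\in\Sigma$ pointing from $y$ to $x$ — in particular of the ``equator'' $\{u\in\Sigma: d_\Sigma(u,\xi)=\pi/2\}$ along which $S_x(r_0)$ meets $B_y(\delta)$, and of how the flat-circle arc sits relative to it. This is exactly the step where, in the knotted case, Davis--Januszkiewicz--Lafont instead \emph{detect} a nontrivial knot via $\pi_1$ of the complement; here the unknottedness hypothesis is what forces each such crossing to be isotopically trivial, so that triviality propagates outward as $r\to\infty$.
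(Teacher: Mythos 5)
Your overall structure — an induction on radius that is constant on regular intervals and changes only when $S_x(r)$ sweeps across a vertex — matches the paper's. The regular-interval step (your Morse-theoretic product argument on a vertex-free annulus) is also how the paper handles it, there by patching the gradient fields $\nabla(D|_{e^i})$ on the cubes into a vector field whose flow is a homeomorphism $S_x(R)\to S_x(r)$ carrying $F\cap S_x(R)$ to $F\cap S_x(r)$. The genuine divergence, and the gap, is exactly the step you flag: crossing a vertex $y\in F$. You propose to produce an ambient isotopy rel endpoints between the two arcs $F\cap S_x(r_0\mp\epsilon)\cap B_y(\delta)$ inside the respective $3$-balls, and you correctly identify that you do not yet know how to carry this out from the raw hypothesis ``$lk_F(y)$ is unknotted in $\Sigma$.''

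The paper avoids constructing this isotopy altogether by passing to a knot connect sum. Near the vertex $y$ (called $v$ in the paper), one builds a homeomorphism $(S_x(r), F\cap S_x(r)) \cong (S_x(R)\ \#\ S_y(\epsilon),\ (F\cap S_x(R))\ \#\ (F\cap S_y(\epsilon)))$, with $F\cap S_y(\epsilon)\cong lk_F(y)\cong k$. One then uses two purely knot-theoretic facts: the unknot is not the connect sum of two nontrivial knots (so the two arcs into which a decomposing sphere cuts an unknot are both trivial arcs), and the connect sum of two unknots is an unknot. Hence $F\cap S_x(r)$ is unknotted. This trades your proposed explicit geometric description of the cone near the equator for the primality of the unknot, which is cleaner and localizes the input to exactly what is assumed ($lk_F(y)$ is trivial). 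If you wanted to finish your version, the observations you would need are the same two: each arc $F\cap S_x(r_0\mp\epsilon)\cap B_y(\delta)$ is an \emph{unknotted} proper arc in its $3$-ball (one because the unknot has trivial halves, the other because $lk_F(y)$ is an unknot and hence has trivial halves), and two unknotted proper arcs in $D^3$ with the same boundary are ambiently isotopic rel boundary; at that point you have essentially reproved the connect-sum argument. One small additional point: your case split ``$y\notin F$ versus $y\in F$'' is a useful clarification that the paper leaves somewhat implicit, but note that for $y\notin F$ one should still check that the annulus between consecutive spheres can be chosen to avoid $F$ near $y$, which follows from $y\notin F$ and $F$ being a closed subcomplex.
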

\begin{proof}
We do this by ``induction'' on the radius of the sphere, $r >0$. 

We know that $lk_{\tilde M} (x)$ is homeomorphic to a sphere of sufficiently small radius around $x$. So there is a $\delta >0$ such that $lk_{\tilde M} (x) \cong S_x(\delta)$. We can further choose $\delta$ to be small enough so that the closed ball $\overline{B_x(\delta)}$ does not contain any vertices from the cubulation of $\tilde M$ other than $x$. Then the pair $(S_x(\delta), F \cap S_x(\delta))$ is homeomorphic to $(lk_{\tilde M} (x), lk_F(x) )$, where $lk_F(x)$ is nothing but a copy of $k$, and hence $F \cap S_x(\delta)$ is unknotted in $S_x(\delta)$. 

Now let $R \ge \delta$ such that $F \cap S_x (r) \hookrightarrow  S_x (r) $ is an unknot for all $0< r \le R$. 
\vspace{3mm}

\noindent \textit{Case 1}: $S_x(R)$ does not contain any vertices in $F \subset \tilde M$.

Let $S_x(r)$ be a sphere of radius $r > R$, such that the open annular region $A(r, R) \defeq B_{x}(r) \setminus \overline{B_{x}(R)}$ does not contain any vertices in $\tilde M$. Such an $r > R$ exists, since otherwise for some $r_0 > R$ there will be infinitely many vertices in $A(r_0, R)$. This is not true since the set of distances of the vertices from $x$ is discrete.

 Consider the function $D : \tilde M^4 \rightarrow \R$ given by $D(y) = d(x, y)$, where $d$ is the path distance defined on $\tilde M$ using the Euclidean distance on each cube in the cubulation. 
The function $D$ restricts to a smooth function on each cube, and we can consider the gradient field of this restriction on each $i$-cell, 
$\nabla (D \bigr\rvert_{e^i}) $.
\begin{figure}[h]
\centering
\includegraphics[scale=0.9]{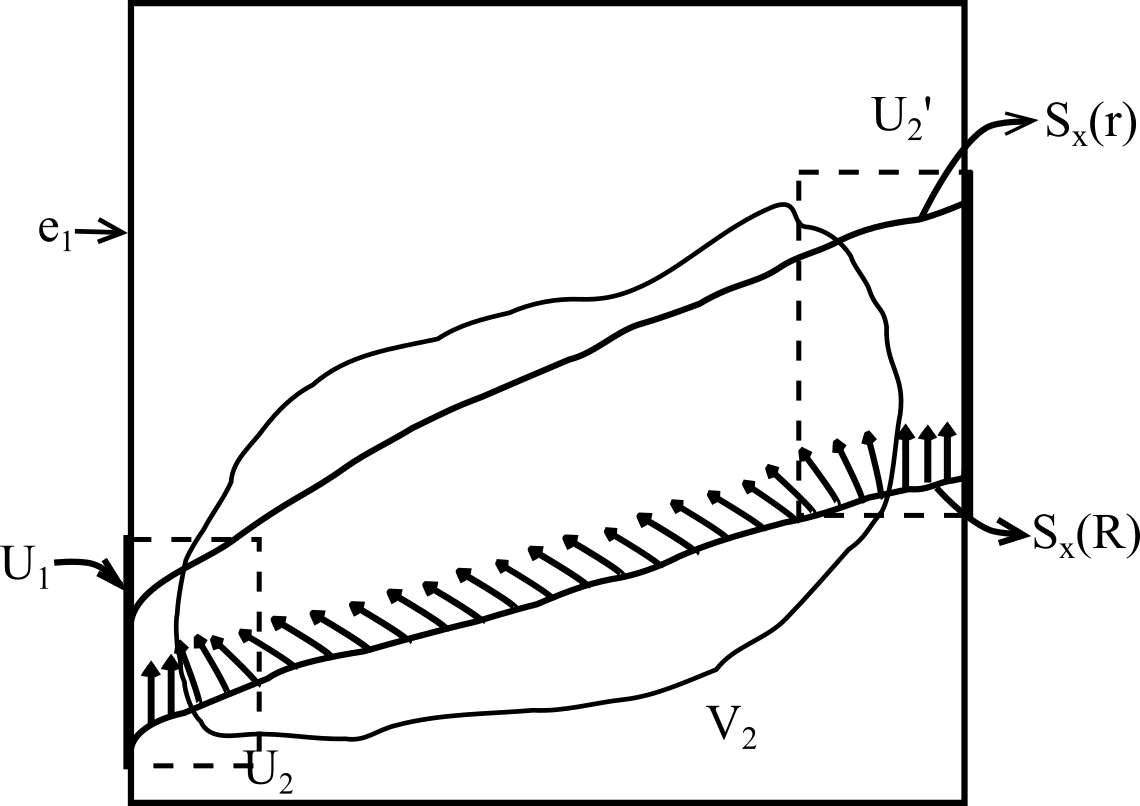}
\caption{A fixed 2-cell $e^2$ in cubulation of $\tilde M$}
\label{localglobal1}
\end{figure}

Let $e^2$ be a fixed 2-cell in the cubulation of $\tilde M$ as shown in Figure \ref{localglobal1}. It is a copy of $[-1,1]^2$ with the Euclidean metric. Suppose $e^1$ is the subcomplex $[-1,1] \times \{ -1 \}$ in $e^2$ that intersects $A(r, R)$, and let $\nabla (D \bigr\rvert_{e^1}) $ be the vector field defined on $e^1$ by the gradient of function $D$. Let $U_1 = ( \alpha, \beta )\times \{ -1 \} \subset e^1$ be an open neighbourhood of $e^1 \cap A(r, R) $ in $e^1$. Define a neighborhood of $e^1\cap A(r,R)$ in $e^2$ as $U_2 \defeq (\alpha , \beta ) \times [-1, \gamma )$, where $-1 < \gamma < -1/2$. Then the metric on $U_2$ is the product metric. 

Using the product structure, one can define a vector field $Y$ on the entire neighbourhood $U_2$ as the pullback of $\nabla (D \bigr\rvert_{e^1}) $ via projection $\pi_1: U_1 \times [0, \alpha) \rightarrow U_1$. 
Similarly, define an open neighbourhood $U_2'$ of $A(r, R) \cap ([-1, 1] \times \{ 1 \})$ as shown and define a vector field $Y'$ on $U_2'$ as above. Together, $Y$ and $Y'$, give a vector field $X_1$ on $U_2 \cup U_2'$.


Now consider an open cover $\{ U_2, U_2', V_2 \}$ of $e^2 \cap A(r, R) $ as shown, and define the vector field, $X_2= \nabla (D \bigr\rvert_{e^2})$ on $V_2$. 
To define a vector field on the open cover, consider a partition of unity for this open cover. There exist functions $f$ and $ g$ such that $f + g = 1$ everywhere, $f$ has compact support inside $U_2 \cup U_2'$ and $g$ has compact support inside $V_2$. Define a new vector field on the open cover by $ f X_1+ g X_2$. This vector field provides a smooth transition from $X_1$ to $X_2$.

Continuing this way, we define a smooth vector field on an open neighbourhood of $A(r, R)  \cap e^i$, for any fixed $i$-cell, $e^i$. Using the fact that $A(r, R)$ intersects finitely many cells we can define vector field on $A(r, R) $, say $X$. Next we normalize this vector field to get a new vector field $\tilde X$ on $A(r, R) $ as follows. 

Let $\phi_y(t)$ be the flow of $X$. We claim that for $y \in S_{x}(R)$, there exists $d_y > 0$ such that $\phi_y(d_y) \in S_{x}(r)$. 
To see this, notice that at every point in $ e^i$, viewing $\nabla (D|_{e^i})$ and $\nabla D$ as vectors in $\R^4$, the inner product $(\nabla (D|_{e^i}) \cdot \nabla D)$ is positive. 
This means that as $t$ increases $D(\phi_y(t))$, the distance of $\phi_y(t)$ from $x$, is strictly increasing. Hence at some $t >0$, $\phi_y(t)$ must reach $S_x(r)$. 



Further, for every $z \in A(r, R) $, there is $y \in S_{x}(R) $ and $t \in [0, d_y]$ such that $\phi_y(t) = z$.
Define, $\tilde X (z) = d_y X(z)$. If $\tilde \phi_y(t)$ denotes the flow of $\tilde X$, then $\tilde \phi_y(1) \in S_{x}(R)$. 
The map $\eta: y \mapsto \tilde \phi_y(1)$ gives a homeomorphism from $S_{x}(r) \rightarrow S_{x}(R)$. 

Further, if $y \in e^i \cap S_x(R)$ for a particular $i$-cell $e^i$, then $\tilde \phi_y(1) \in e^i \cap S_x(r)$. We show this by induction on $i$. It is clear that this is true for $i = 1$. 
Now suppose this is true for $i$ but not for $i +1$, then for some $y \in e^{i+1} \cap S_x(R)$, but $\tilde \phi_y(1) \notin e^{i+1} \cap S_x(r)$. Since $y \mapsto \tilde \phi_y(1)$ is a homeomorphism, for some $e^i \subset \partial e^{i+1}$ and $y' \in S_x(R)$, we must have $\tilde \phi_{y'}(1) \notin e^{i} \cap S_x(r)$ giving a contradiction to our hypothesis. 

This shows that $\eta$ is a homeomorphism $S_x(R) \rightarrow S_x(r)$. Further, since $F$ is a 2-dimensional subcomplex of the cubulation of $\tilde M$, $\eta$ maps $F \cap S_{x}(r)$ to $F \cap S_{x}(R)$ since $F$ . Since $F \cap S_{x}(R)$ is an unknot, $F \cap S_{x}(r)$ must be an unknot in $ S_{x}(r)$.

\begin{figure}[h]
\label{localglobal1a}
\centering
\includegraphics[scale=0.6]{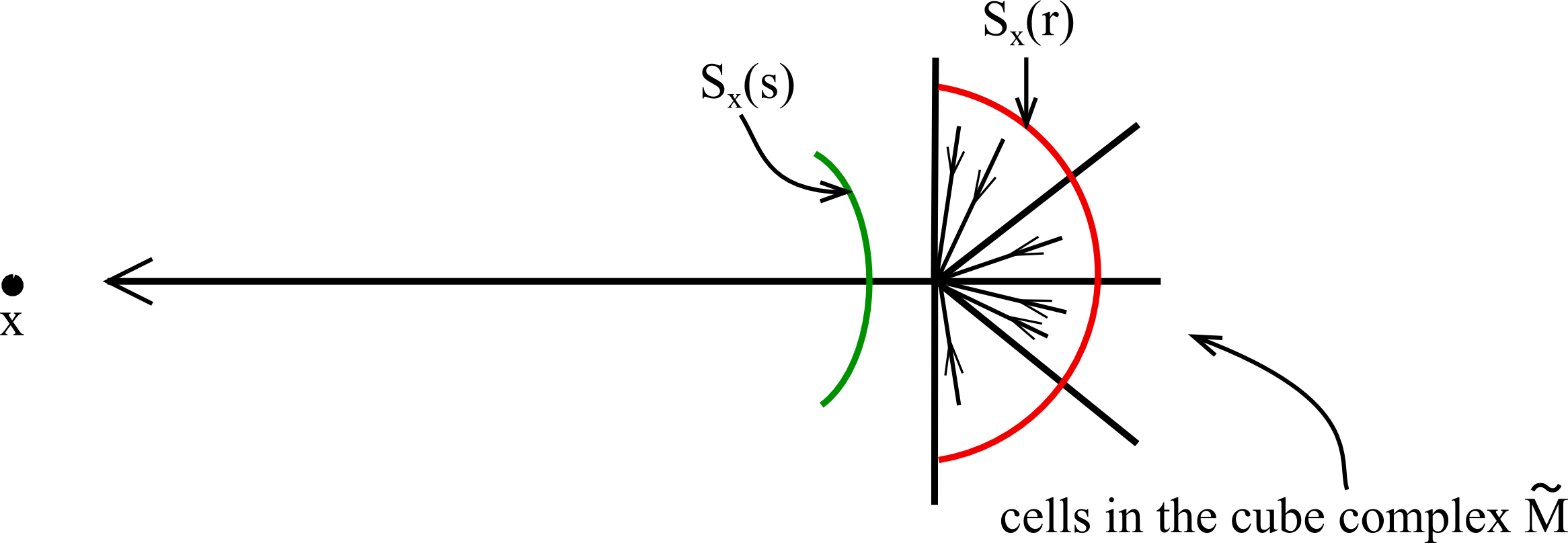}
\caption{}
\label{localglobal1a}
\end{figure}

Note that we restrict the distance function $D$ on each cube, since in general we might encounter singularities in co-dimension 3. A picture in lower dimension is given in Figure \ref{localglobal1a} to indicate such a singularity.

\vspace{3mm}
\noindent \textit{Case 2}: $S_x(R)$ contains a vertex in $F \subset \tilde M$.

Suppose $S_x(R)$ contains one vertex, say $v \in F$. 
First choose an open ball $B_v(\epsilon)$ centered at $v$, with $\epsilon$ small enough so that $F\cap B_v (\epsilon) \cap S_{x}(R)$ is an arc with two endpoints lying on the boundary of $B_v (\epsilon)$. For $R <r< R+ \epsilon$, consider the geodesic retraction map $\rho : S_{x}(r) \rightarrow S_{x}(R)$, and let $B \defeq \rho^{-1}(B_v (\epsilon) \cap S_x(R)) \subset S_{x}(r)$. Define a map $\lambda_1: B \rightarrow S_v(\epsilon )$ such that for $y \in B$, $\lambda_1(y) $ is the point where the geodesic ray from $v$ passing through $y$ intersects the sphere $ S_v(\epsilon )$. See Figure \ref{localglobal2}.

Observe that the restriction of $\rho$ to $B$ is a cell-like map, and hence a near-homeomorphism. So $B$ is homeomorphic to $B_v(\epsilon) \cap S_x(R)$ which is homeomorphic to an open disk. Since $\lambda_1$ is nothing but radial projection of $B$ onto the sphere $S_v(\epsilon )$, $\lambda_1 (B)$ is also homeomorphic to an open disk. 
Hence, the complement of $\lambda_1(B)$ in $S_v(\epsilon)$ is homeomorphic to an open disk. 
On removing this complement, and attaching $\partial \lambda_1(B)$ to $\partial( B_v (\epsilon) \cap S_{x}(R) )$ we get the connect sum $S_x(R) \# S_v(\epsilon)$ and we can define the map 
$$\lambda : S_{x}(r)  \longrightarrow S_{x}(R)\ \#\ S_v(\epsilon)$$
$$y \mapsto \begin{cases} \lambda_1(y),\hspace{2mm} \mathrm{ if}\  y \in B \\
 \rho (y),\hspace{4mm}  \mathrm{if}\ y \notin B &
\end{cases}$$

\begin{figure}[h]
    \includegraphics[width=0.5\textwidth]{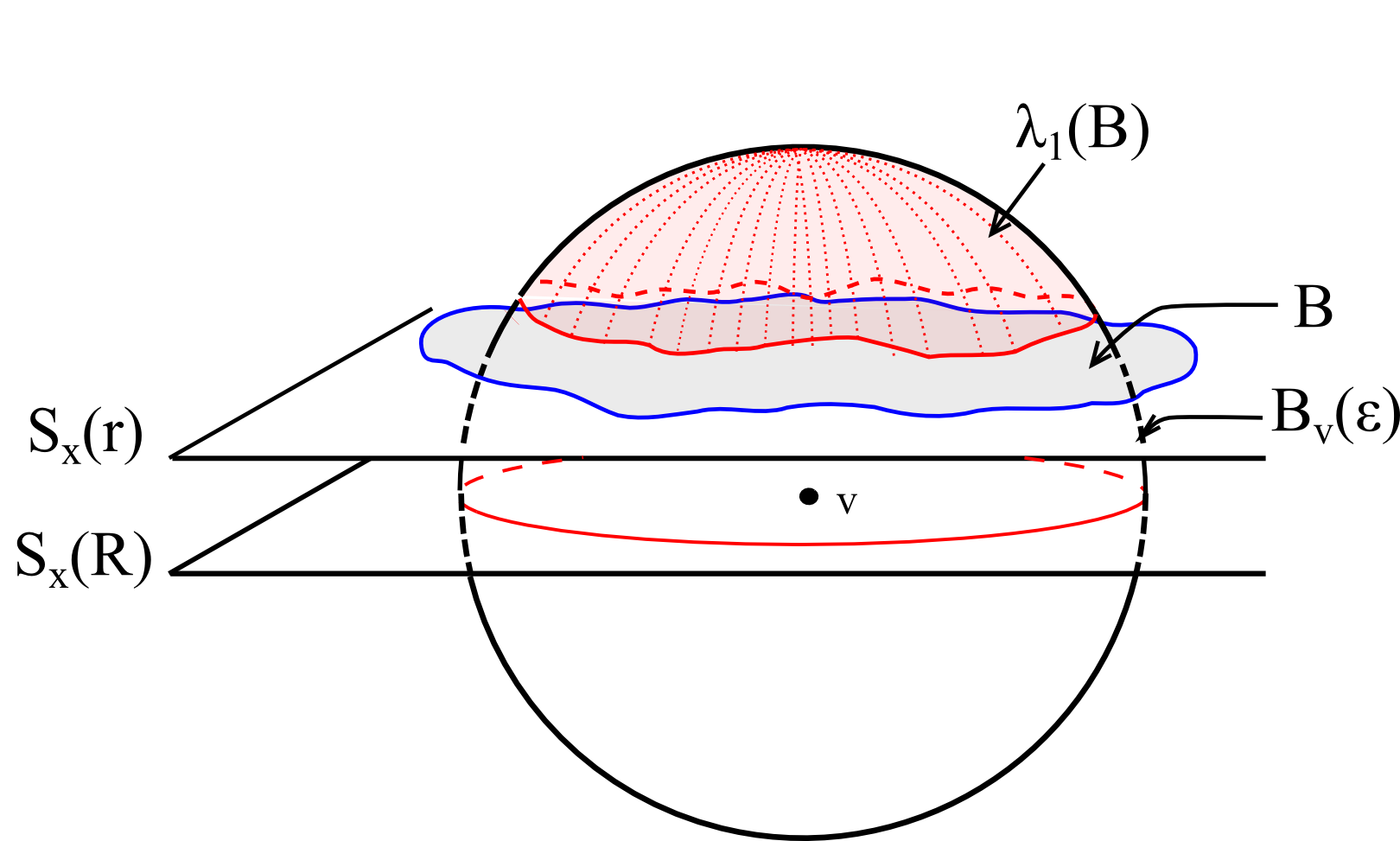}
  \caption{}
  \label{localglobal2}
\end{figure}

Since $\lambda_1$ is the radial projection of $B$ onto the sphere $S_v(\epsilon )$, it maps $B\cap F$ homeomorphically into $S_v(\epsilon) \cap F$. Also, $\rho$ maps the arc $(S_x(r)\setminus B) \cap F$ to the arc $(S_x(R) \setminus B_v (\epsilon) ) \cap F$ homeomorphically. 
To see that $\lambda$ maps $S_x(r) \cap F$ to the connect sum 
 $S_v(\epsilon ) \cap F) \# (S_x(R) \cap F)$, we need to first check that the arcs in the complements, $(S_v(\epsilon) \setminus \lambda_1(B)) \cap F$ and $(S_x(R) \setminus B_v(\epsilon)) \cap F$, are unknotted. 
Observe that $S_x(R) \cap F$ is an unknot by the induction hypothesis. It is well known that unknot cannot be the connect sum of two non-trivial knots. (This can be seen from the fact that genus of a knot is additive \cite{Sc}.) Hence, each of the arcs  $(S_x(R) \cap B_v(\epsilon)) \cap F$ and $(S_x(R) \setminus B_v (\epsilon) ) \cap F$ must unknotted. Also, $ S_v(\epsilon) \cap F$ is homeomorphic to $lk_F(v)$ and which is a copy of the unknot $k$. Hence the arcs $(S_v(\epsilon) \cap \lambda_1(B)) \cap F$ and $(S_v(\epsilon) \setminus \lambda_1(B)) \cap F$ are both unknots. 
Hence $\lambda$ maps $S_x(r) \cap F$ is homeomorphically to the knot connect sum $(S_x(R) \cap F) \# (S_v (\epsilon) \cap F)$. Now it is known that connect sum of two unknots is also an unknot, and hence, $S_x(r) \cap F \hookrightarrow S_x(r)$ must be an unknot. 

If $S_x(R)$ contains more than one vertices, say $ v_1, \ldots, v_k$, then using the above argument we can define a homeomorphism $\lambda : S_x(r) \rightarrow S_x(R) \# S_{v_1}(\epsilon_1) \# \cdots \# S_{v_k}(\epsilon_k)$ which maps $S_x(r) \cap F$ to $(S_x(R) \cap F) \# (S_{v_1}(\epsilon_1) \cap F) \# \cdots \# (S_{v_k}(\epsilon_k) \cap F)$. Since each pair $(S_{v_i}(\epsilon_i) , S_{v_i}(\epsilon_i) \cap F)$ is homeomorphic to $(S^3 , k)$, where $k$ is the unknot, and $(S_x(R) \cap F)$ is an unknot by assumption, the connect sum $(S_x(R) \cap F) \# (S_{v_1}(\epsilon_1) \cap F) \# \cdots \# (S_{v_k}(\epsilon_k) \cap F)$ is also an unknot in $S_x(R) \#S_{v_1}(\epsilon_1) \# \cdots \# S_{v_k}(\epsilon_k)$. 
\end{proof}


Consider the geodesic retraction map $\rho^r_{s} : (S_x(r) , S_x(r) \cap F) \rightarrow (S_x(s) , S_x(s) \cap F)$ as a map of a pair of spaces, where $r>s >0$.

\begin{defn} 
A map $f: X \rightarrow Y$ is said to be a \textit{near-homeomorphism} if it can be approximated arbitrarily closely by homeomorphisms $ X \rightarrow Y$.

A map $f: (X,A) \rightarrow (Y, B)$ is said to be a \textit{near-homeomorphism of pairs} if it can  be approximated arbitrarily closely by homeomorphisms of pairs.  
\end{defn} 

\begin{lem} \label{lem2} Let $M, F$ and $x$ be as before. If $F \cap S_x(r) \hookrightarrow S_x(r)$ is an unknot for every $r >0$, then the geodesic retraction map $\rho^r_{s} : (S_x(r) , S_x(r) \cap F) \rightarrow (S_x(s) , S_x(s) \cap F)$ is a near-homeomorphism of pairs for every $r>s$.
\end{lem}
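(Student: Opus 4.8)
The plan is to realize $\rho^r_s$ as a composition of ``one-vertex-crossing'' maps, each of which is already analyzed in Lemma \ref{lem1}, and to show each such map is a near-homeomorphism of pairs. First I would note that since the set of distances $\{d(x,v) : v \text{ a vertex of the cubulation}\}$ is discrete, there are only finitely many vertices $v_1,\dots,v_k$ of $\tilde M$ lying in the closed annular region $\overline{B_x(r)}\setminus B_x(s)$, and among these, $v_1,\dots,v_m$ say are the vertices lying on $F$. Choosing radii $s=r_0<r_1<\cdots<r_N=r$ so that each half-open shell $(r_{j-1},r_j]$ contains at most one vertex of the cubulation, it suffices to prove the statement when the annulus $\overline{B_x(r)}\setminus B_x(s)$ contains at most one vertex, and then compose. (Near-homeomorphisms of pairs of compact metric spaces are closed under composition, by the usual $\varepsilon$-$\delta$ argument.)

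In the case where the shell contains no vertex at all, Case 1 of Lemma \ref{lem1} already produces an honest homeomorphism of pairs $(S_x(r),S_x(r)\cap F)\to(S_x(s),S_x(s)\cap F)$ (the flow map $\eta$, which respects the cell structure and hence carries $F\cap S_x(r)$ onto $F\cap S_x(s)$), and one checks that $\eta$ is a bounded distance from $\rho^r_s$ — indeed $\rho^r_s$ and $\eta$ both send a point to a point on the radially-ish nearby sphere, so after rescaling radii they agree up to the diameter of a cell, and the standard trick of conjugating by radial rescalings of the annulus shows $\rho^r_s$ itself is approximable by homeomorphisms of pairs. In the case where the shell contains exactly one vertex $v$: if $v\notin F$, the geodesic retraction $\rho^r_s$ restricted to the preimage of a small metric ball $B_v(\epsilon)\cap S_x(s)$ is a cell-like map between manifolds (the preimage of a point is the intersection of $S_x(r)$ with a geodesic cone on $v$, which is cell-like), hence a near-homeomorphism by the cell-like approximation theorem / the fact that cell-like maps between finite-dimensional ANRs that are manifolds are near-homeomorphisms, and away from $B_v(\epsilon)$ the map is already a homeomorphism; since $v\notin F$, $F$ is untouched, so this is a near-homeomorphism of pairs. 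If $v\in F$, I would use exactly the connect-sum analysis from Case 2 of Lemma \ref{lem1}: $\rho^r_s$ is cell-like on the preimage of $B_v(\epsilon)\cap S_x(s)$, and the auxiliary map $\lambda$ built there is a homeomorphism onto $S_x(s)\#S_v(\epsilon)$ carrying $F\cap S_x(r)$ onto the knot connect sum; since both $S_x(s)\cap F$ and $S_v(\epsilon)\cap F\cong lk_F(v)$ are unknots, the connect sum is an unknotted $S^1$ in an $S^3$, so there is a homeomorphism of pairs $(S_x(s)\#S_v(\epsilon), \text{connect sum knot})\cong(S_x(s),S_x(s)\cap F)$, and precomposing $\lambda$ with a $C^0$-small modification near the connect-sum spheres shows $\rho^r_s$ is approximable by homeomorphisms of pairs.

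The key technical input throughout is that a cell-like surjection between compact finite-dimensional ANR's which happen to be topological manifolds is a near-homeomorphism — this is the classical result of Siebenmann (and in low dimensions Armentrout, for $3$-manifolds), and the pair version follows because the map is already a homeomorphism on a neighborhood of the complement of a small ball and one may approximate rel that complement. So the proof is essentially: decompose the retraction into shells with one vertex each; on a no-vertex shell invoke Case 1; on a vertex shell invoke cell-likeness plus the connect-sum identification from Case 2; and glue by composition.

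The main obstacle I anticipate is the ``pair'' refinement: cell-like approximation gives homeomorphisms of the ambient $S^3$'s close to $\rho^r_s$, but a priori these need not carry $F\cap S_x(r)$ onto $F\cap S_x(s)$. The resolution is that $F$ is a $2$-dimensional subcomplex, so $\rho^r_s$ restricted to $S_x(r)\cap F$ is \emph{itself} a cell-like map of $1$-complexes (circles) onto $S_x(s)\cap F$ and hence a near-homeomorphism of the knots; one must then promote ``near-homeomorphism of the knot'' together with ``near-homeomorphism of the ambient sphere'' to ``near-homeomorphism of the pair.'' This is where the explicit models of Lemma \ref{lem1} do the work — because in each elementary shell the map is, up to $C^0$-small error and a cell-like collapse supported near one vertex, literally radial projection, which visibly respects the pair $(S^3,F)$ — so one gets approximating homeomorphisms adapted to $F$ rather than having to invoke an abstract relative cell-like approximation theorem. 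Carefully tracking the supports of the approximations so that the compositions over the finitely many shells remain $C^0$-controlled is the one genuinely fiddly point, but it is routine given the setup.
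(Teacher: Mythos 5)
Your proposal takes a genuinely different route from the paper, and it has a real gap at its center. The paper does not decompose $\rho^r_s$ into shell maps at all. Instead it works directly with the given $r>s$: it invokes Munkres's smoothing theorem to produce a diffeomorphism $\overline\rho:S_x(r)\to S_x(s)$ with $\|\overline\rho-\rho\|<\epsilon_0$ (this is the crucial step that gives arbitrary $C^0$-closeness to $\rho$ for free), and then corrects the fact that $\overline\rho(S_x(r)\cap F)$ need not equal $S_x(s)\cap F$ by purely knot-theoretic surgery: it chops both circles into short piecewise-flat arcs, matches endpoints with small compactly supported diffeomorphisms via Milnor's Homogeneity lemma, and then applies the Martin--Rolfsen theorem (homotopic flat arcs are ambient isotopic, with the isotopy supported in a prescribed open set) arc-by-arc inside $5\epsilon_0$-balls. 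Every move is supported in a set of controlled diameter, so the final $\psi=\phi\circ f\circ\overline\rho$ is $\epsilon$-close to $\rho$ and is a homeomorphism of pairs.

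The gap in your argument is in the no-vertex shells. The flow map $\eta$ you borrow from Case 1 of Lemma \ref{lem1} was built from a vector field patched together by a partition of unity out of cell-by-cell gradients of $D$; near cell boundaries it is not radial, and nothing in that construction makes $\eta$ arbitrarily $C^0$-close to $\rho^r_s$ for a \emph{fixed} shell. Your fix of taking thinner shells only bounds $\|\eta-\rho\|$ by the shell width, and for fixed $r,s$ you must then compose over a growing number of shells, with the modulus of continuity of each intermediate $\rho$ feeding into the accumulated error; the ``standard trick of conjugating by radial rescalings'' is not spelled out and, as stated, does not give a uniform estimate. (Lemma \ref{lem1} never needed $\eta$ to be close to $\rho$ — it only needed $\eta$ to be \emph{some} homeomorphism of pairs, to propagate unknottedness.) The one-vertex case has a parallel issue: $\lambda$ lands in $S_x(s)\#S_v(\epsilon)$, and the identification of that connect sum back with $(S_x(s),S_x(s)\cap F)$ must itself be chosen $C^0$-close to the collapse of $S_v(\epsilon)$, which you assert but do not justify. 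Your instinct to invoke cell-like approximation for the ambient spheres is sound — the paper cites Armentrout for exactly this when proving $\rho$ is a near-homeomorphism on the nose — but, as you correctly flag, it does not by itself give a near-homeomorphism \emph{of pairs}, and the explicit models you lean on to bridge that gap are not close enough to $\rho$. Munkres plus Homogeneity plus Martin--Rolfsen is precisely the machinery that simultaneously fixes the pair and keeps everything $C^0$-controlled, which is why the paper goes that way.
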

\begin{proof}
First fix $r > s > 0$. Let $\epsilon >0$ be given. We show that there exists a homeomorphism $\psi :  (S_x(r) , S_x(r) \cap F) \rightarrow (S_x(s) , S_x(s) \cap F)$ such that $|| \psi - \rho_s^r|| < \epsilon.$

Choose $\epsilon_0 < \epsilon /8$. Observe that $\rho = \rho^r_{s}$ can be approximated by diffeomorphisms \cite[Corollary]{M}. Thus, there is a diffeomorphism $\overline \rho : S_x(r) \rightarrow S_x(s)$ such that $||\rho - \overline \rho || < \epsilon_0$. Then $\overline \rho (S_x(r) \cap F) $ is an embedding of $S^1 $ in $S^3$. However, the image $\overline \rho (S_x(r) \cap F)$ might not lie in $S_x(s) \cap F$. So now we construct a homeomorphism $S_x(s) \rightarrow S_x(s)$ such that the image $\overline \rho (S_x(r) \cap F) \subset S_x(s)$ is mapped to $S_x(s) \cap F$.  
 
We know that $S_x(r) \cap F$ is an unknotted piecewise flat embedding of $S^1$ in $S_x(r)$, and hence so is $\overline \rho (S_x(r) \cap F)$.


Choose finitely many points $x_1, \ldots, x_k$ on $\overline \rho (S_x(r) \cap F)$ to give finitely many arcs $\alpha_1, \ldots, \alpha_k$ such that 
\begin{itemize} 
\item[(i)] $x_{i-1}, x_i$ are end points of the arc $\alpha_i$ ($i = 1, \ldots , k$), with $x_0 = x_k$, 
\item[(ii)] $\underset{i}{\cup}\ \alpha_i = \overline \rho (S_x(r) \cap F)$, and
\item[(iii)] $2\epsilon_0 < l_i < 3\epsilon_0$, where $l_i$ is the length of the arc $\alpha_i$ $(i = 1, \ldots, k).$
\end{itemize}

Define $k$ points $y_1, \ldots, y_k$ on $S_x(s) \cap F$ by $y_i \defeq \rho \ccirc \overline \rho^{-1} (x_i)$, and arcs $\beta_i \defeq \rho \ccirc \overline \rho^{-1}(\alpha_i) \ (i = 1, \ldots, k)$. We have the following properties.
 \begin{itemize} 
\vskip-20pt
\item[(i)] $y_{i-1}, y_i$ are end points of the arc $\beta_i$ $(i = 1, \ldots, k)$, with $y_0 = y_k$, 
\item[(ii)] $\underset{i}{\cup}\  \beta_i = S_x(s) \cap F$, and
\item[(iii)] $\beta_i \subset N_{\epsilon_0}(\alpha_i)$, where $ N_{\epsilon_0}(\alpha_i)$ is the $\epsilon_0$-neighborhood of $\alpha_i$.
\end{itemize}


\begin{figure}[h]
\vspace{2mm}
\includegraphics[scale=0.9]{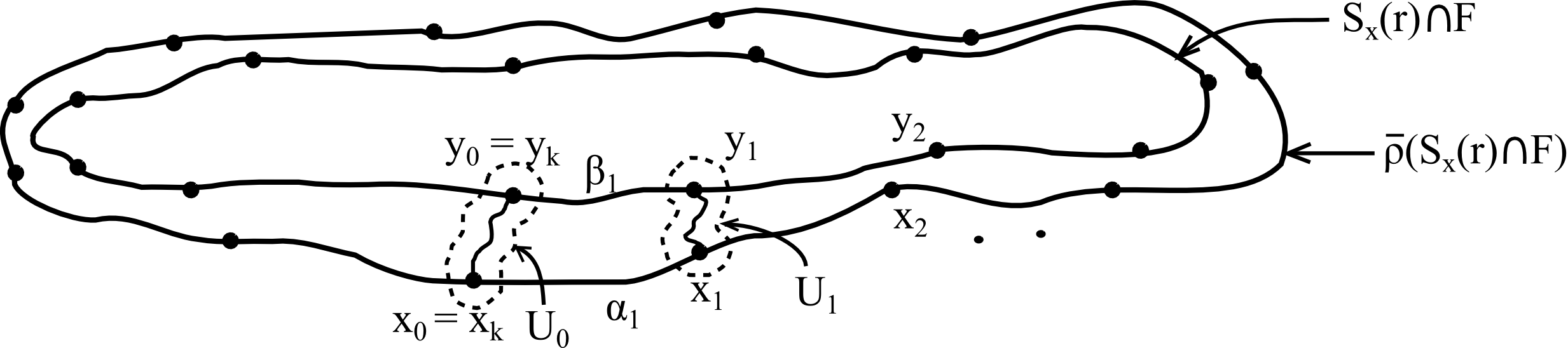}
\caption{}
\label{localglobal3}
\end{figure}

For each $i \in \{ 1, \ldots, k\}$ let $U_i$ be an open neighborhood with diameter $\le 2\epsilon_0$ containing $x_i$ and $y_i$, such that all the $U_i$s are pairwise disjoint and are homeomorphic to open disks. One can construct such neighborhoods as follows. Start with pairwise disjoint paths $\gamma_i$ from $x_i$ to $y_i$ such that $\gamma_i$ are ``unknotted'' in the following sense: there exists a homotopy $G_t : S_x(s) \rightarrow S_x(s)$, $t \in [0,1],$ such that $G_0 (\alpha_i) = \alpha_i$, $G_1 (\alpha_i) = \beta_i$, and $G_t (\gamma_i) \subset \gamma_i$ for every $i \in \{ 1, \ldots , k \}$. Let $\epsilon_1 = \mathrm{min}\{ d(\gamma_i , \gamma_j)\ : 1 \le i \not= j \le k \}$. Choose $\epsilon_2 < \frac{1}{2}\mathrm{min} \{ \epsilon_0, \epsilon_1 \}$ and then define $U_i \defeq N_{\epsilon_2}(\gamma_i)$. See Figure \ref{localglobal3}.

For every $i$, by the Homogeneity lemma \cite{Mil2}, there is a diffeomorphism $f_i: S_x(s) \rightarrow S_x(s)$ that maps $x_i $ to $y_i$ and keeps $S_x(s) \setminus U_i$ fixed.
 Consider the diffeomorphism $f = f_1 \circ f_2 \circ \ldots \circ f_k : S_x(s) \rightarrow S_x(s)$. Then $f \circ \overline \rho (S_x(r) \cap F)$ is a union of flat arcs $f(\alpha_i)$ with endpoints $y_{i-1}$ and $y_i$. Thus, we have pairs of path homotopic flat arcs $f(\alpha_i) $ and $\beta_i$ with same endpoints. Further, since the neighborhoods $U_i$ are disjoint, $f$ is $2\epsilon_0$-close to the identity map on $S_x(s)$. See Figure \ref{localglobal4}.

\begin{figure}[h]
\vspace{3mm}
\includegraphics[scale=0.9]{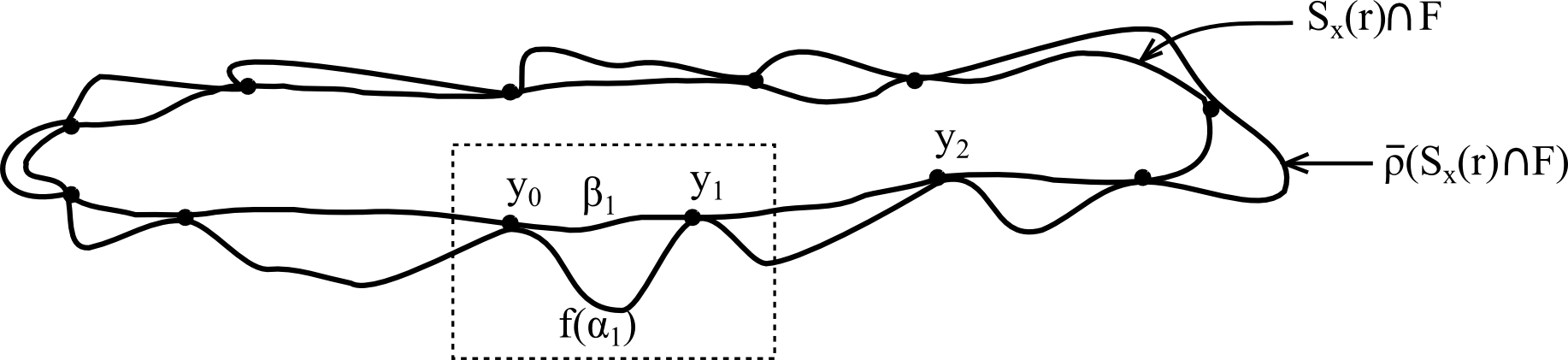}
\caption{}
\label{localglobal4}
\end{figure}

Choose a pair of such arcs, say $f(\alpha_1)$ and $\beta_1$ with the endpoints $y_0$ and $y_1$. 
We now build a homeomorphism $S_x(s) \rightarrow S_x(s)$ that perturbs the arc $f(\alpha_1 )$ so that it coincides with $\beta_1$ in small neighborhoods of $y_0$ and $y_1$. 

Let $K$ be a compact set containing $f(\alpha_1) \cup \beta_1$ such that the endpoints $y_0, y_1$ lie on the boundary $\partial K$, and diam$(K) < 5\epsilon_0$. Such a neighborhood exists, since diam$( N_{\epsilon_0}(\alpha_i)) \le 2\epsilon_0 + l_1 < 5\epsilon_0$.

Since the arcs $f(\alpha_1)$ and $\beta_1$ are piecewise linear, there is a closed ball $B_{y_0}(\delta)$ with $0 < \delta < l_1/3$ such that there is a homeomorphism of $B_{y_0}(\delta)$ onto $\D^3$ which carries the arcs $f(\alpha_1) \cap B_{y_0}(\delta)$ and $\beta_1 \cap B_{y_0}(\delta)$ onto straight line segments in $\D^3$. These arcs intersect the sphere $S_{y_0}(\delta /2)$ in points, say, $S_{y_0}(\delta /2) \cap f(\alpha_1) = \{ a \}$ and $S_{y_0}(\delta /2) \cap \beta_1 = \{ b \}$. Refer to Figure \ref{localglobal5}. 
There exists an ambient isotopy $g$ of $\overline{ B_{y_0}(\delta /2) } \cap K$ that fixes the boundary $\partial (B_{y_0}(\delta /2) \cap K)$ and 
maps $f( \alpha_1)$ to $\beta_1$. In particular, it maps $a$ to $b$. 

\begin{figure}[h]
\begin{subfigure}{0.45\textwidth}
\centering
\includegraphics[scale=0.8]{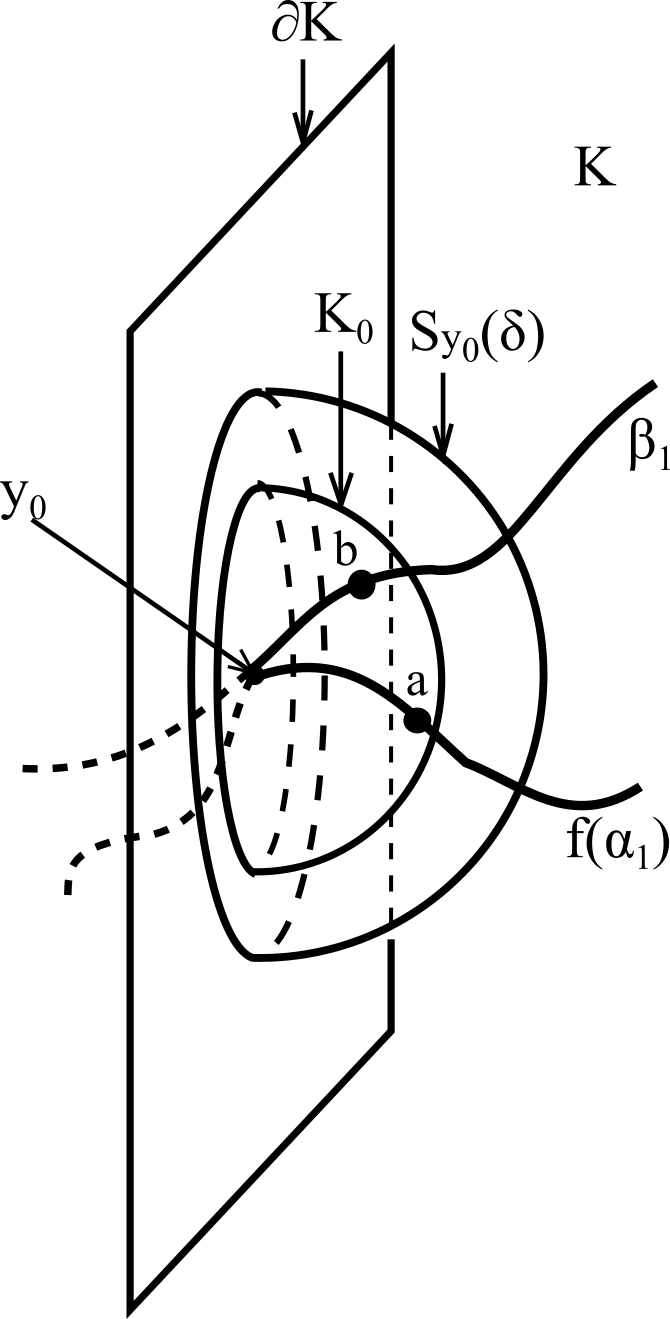}
\caption{}
\label{localglobal5}
\end{subfigure}
\begin{subfigure}{0.45\textwidth}
\centering
\includegraphics[scale=0.8]{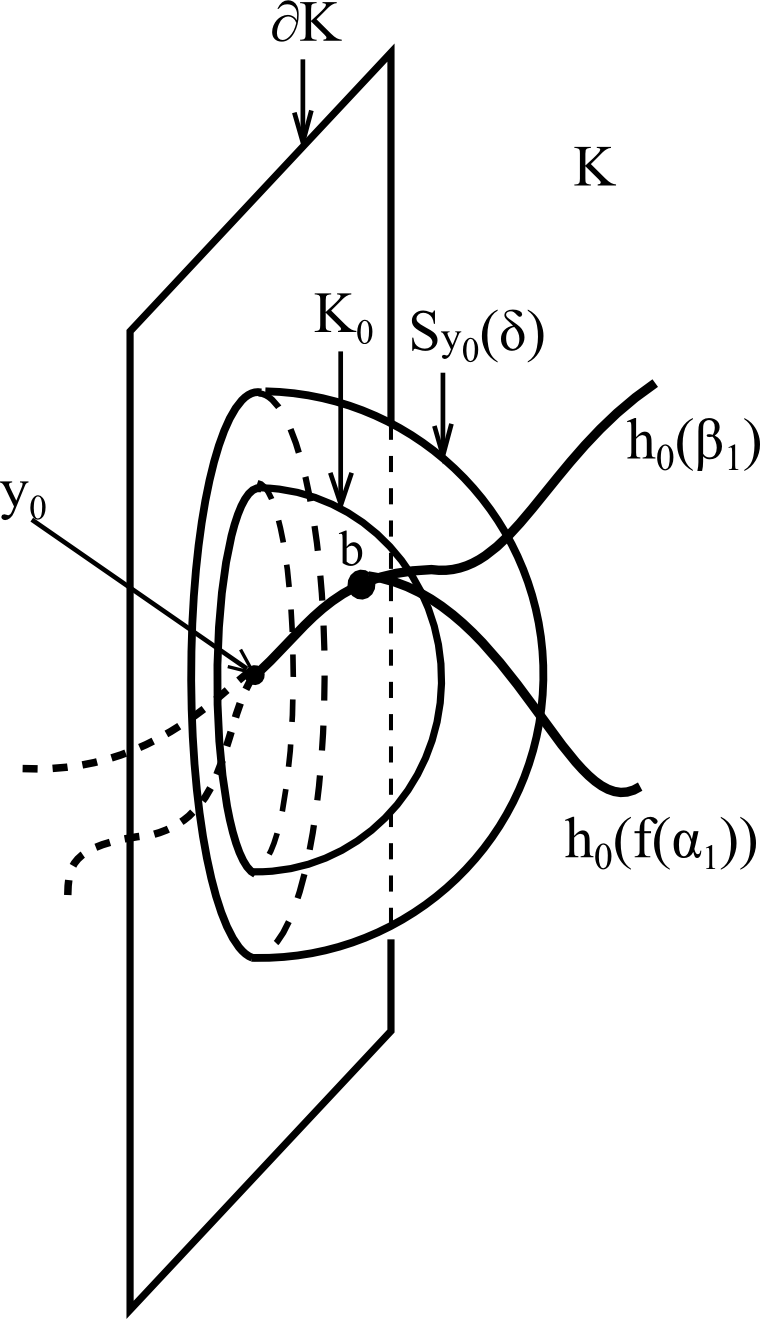}
\caption{}
\end{subfigure}
\caption{}
\end{figure}

Let 
$A_K(\delta, \delta /2) \defeq (\overline{B_{y_0}(\delta)} \setminus B_{y_0}(\delta /2)) \cap K$ be the closed annular region in $K$ between the two spheres. Observe that $K_0 \defeq S_{y_0}(\delta /2) \cap K$ is homeomorphic to $\D^2$ and that $A_K(\delta, \delta /2)$ is homeomorphic to $\D^2 \times I$. Let $\phi : K_0 \rightarrow \D^2$, and $\tilde \phi : A_K(\delta, \delta /2) \rightarrow \D^2 \times I$ be a pair of homeomorphisms such that $\tilde \phi |_{K_0} = \phi$, and $\tilde \phi (A_K(\delta, \delta /2) \cap f(\alpha_1) ) = \{ \phi(a) \} \times I$ and $\tilde \phi (A_K(\delta, \delta /2) \cap \beta_1 )  = \{ \phi (b) \} \times I$.

Now the homeomorphism $g$ restricted to $S_{y_0}(\delta /2)  \cap K$ 
maps $f( \alpha_1)$ to $\beta_1$ and is isotopic to the identity map, which in turn gives a homotopy of the disc $\D^2$, 
$H_t : \D^2  \rightarrow \D^2$ ($t \in [0, 1]$) such that $H_0 = \phi \circ g \circ \phi^{-1}$ and $H_1 = id$. One can use $H_t$ to define a homeomorphism $\tilde H : \D^2 \times I \rightarrow \D^2 \times I$ given by $(x, t ) \mapsto (H_t(x), t )$.

This in turn gives a map $\tilde \phi ^{-1} \circ \tilde H \circ \tilde \phi : A_K(\delta, \delta /2) \rightarrow A_K(\delta, \delta /2)$ that fixes the boundary, $\partial (A_K(\delta, \delta /2))$. This together gives a homeomorphism $h_0 : S_x(s) \rightarrow S_x(s)$ such that $h_0|_{A_K(\delta, \delta /2)} = \tilde \phi ^{-1} \circ \tilde H \circ \tilde \phi $ and $h_0 = id$ outside $A_K(\delta, \delta /2)$. 
The arcs $h_0(f(\alpha_1)) $ and $h_0 (\beta_1)$ coincide in the ball $K \cap B_{y_0} (\delta /2)$. 

Similary, one can define another homeomorphism $h_1 : S_x(s) \rightarrow S_x(s)$ which gives arcs that coincide in a neighbourhood, $K \cap B_{y_1} (\delta' /2)$ of $q$, for some $0 <\delta ' < l_1/3 $. This gives us arcs $h_1 \circ h_0 (f(\alpha_1))$ and $h_1 \circ h_0 (\beta_1)$ that coincide in the neighbourhoods $K \cap B_{y_0} (\delta /2)$ and $K \cap B_{y_1} (\delta' /2)$. Consider the new arcs $\alpha_1 '$ and $\beta_1 '$ with common endpoints $b = y_0''$ and $y_1'$ 
as shown. Observe that they are path homotopic. 

We now apply a result by Martin and Rolfsen \cite{MR} which says that homotopic arcs are, in fact, isotopic. Let $U$ be an open set in $S_x(s)$ such that $\alpha' \cup \beta' \subset U \subset K$. The result in our context states the following:

\begin{thm}[Martin-Rolfsen]
\label{martinrolfsen}
Let $\alpha'$ and $\beta'$ be path homotopic flat arcs in $S_x(s)$ with end points $a'$ and $b'$, and $U$ be an open set containing $\alpha'$ and $\beta'$ as described above. Then there exists an ambient isotopy $\Phi_t (0\le t \le 1)$ of $S_x(s)$ fixed on $a'$ and $b'$ such that $\Phi_1(\alpha') = \beta'$. Further, this isotopy is fixed on $S_x(s)\setminus U$. 
\end{thm}

Now consider the homeomorphism on $S_x(s)$ given by $\phi_1 \defeq \Phi_1\circ h_1 \circ h_0$, where $\Phi_1$ is as in Theorem \ref{martinrolfsen}. Observe that $\phi_1(f(\alpha_1)) = \beta_1$ and it fixes $(S_x(s) \setminus K) \cup \partial K$. Since the diam$(K) < 5\epsilon_0$, $\phi_1$ is $5\epsilon_0$-close to the identity map. 


This process can be done for every pair of arcs $f(\alpha_i)$ and $\beta_i$ to obtain homeomorphisms $\phi_i: S_x(s) \rightarrow S_x(s)$ which map $f(\alpha_i)$ to $\beta_i$, fix $(S_x(s) \setminus K_i) \cup \partial K_i$ ($K_i$ are compact sets as above), and stay $ 5\epsilon_0$-close to $id$.
 This gives a homeomorphism $\phi \defeq \phi_1 \circ \cdots \circ \phi_k$ which maps each $\alpha_i$ to $\beta_i$ and hence maps $\overline \rho (S_x(s) \cap F)$ to $S_x(s) \cap F$. Observe that the interiors of $K_i$ are pairwise disjoint, and hence the compostion of $\phi_i$ is also $5\epsilon_0$-close to $id$. 

Define $\psi = \phi \circ f \circ \overline \rho : S_x(r) \rightarrow S_x(s)$ that maps  $S_x(r) \cap F$ to $S_x(s) \cap F$. Further, $\psi$ is $\epsilon$-close to $\rho$ since
\begin{eqnarray*}
d(\rho (x) , \psi (x) ) & \le & d(\rho (x) , \overline \rho (x) ) + d(\overline \rho (x) , f \circ \overline \rho (x) ) + d(f \circ \overline \rho (x) , \phi \circ f \circ \overline \rho (x) )\\
& < & \epsilon_0 + 2\epsilon_0 + 5\epsilon_0\\
& < & \epsilon.
\end{eqnarray*}
Since $\epsilon > 0$ was arbitrary, this shows that every map $\rho^r_{s} = \rho : (S_x(r), S_x(r) \cap F) \rightarrow (S_x(s), S_x(s) \cap F)$ is indeed a near-homeomorphism of pairs.  
\end{proof}

\vspace{2mm}

Now consider the following result regarding inverse limits of sequences of spaces \cite{B}. 
\begin{thm}[Brown] \label{Brown} 
Let $X = \underset{\longleftarrow}{\lim} (X_n, f_n)$ where the $X_n$ are all homeomorphic to a compact metric space $S$, and for all $n$, $f_n$ is a near homeomorphism. Then $X$ is homeomorphic to $S$. 
\end{thm} 
 
Lemma \ref{thm for pairs} states this result for an inverse system of pairs of spaces.

\begin{lem} \label{thm for pairs} Let $(X, A)= \underset{\longleftarrow}{\lim}((X_i, A_i), f_i)$, where $X_i$ are all homeomorphic to a compact space $S$, and all $A_i$ are homeomorphic to a compact metric space $T$. For $i\ge 2$, let $f_i: (X_i, A_i) \rightarrow (X_{i-1}, A_{i-1})$ be a near homeomorphism of pairs. Then $(X,A)$ is homeomorphic to $(S, T)$.  
\end{lem}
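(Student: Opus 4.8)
The plan is to reduce Lemma~\ref{thm for pairs} to Brown's Theorem~\ref{Brown} applied to \emph{two} inverse systems simultaneously --- the system of ``total spaces'' $(X_i, f_i)$ and the system of ``subspaces'' $(A_i, f_i|_{A_i})$ --- and then to glue the resulting homeomorphisms together compatibly. First I would unpack the definition of near-homeomorphism of pairs: for each $i$ and each $\epsilon>0$ there is a homeomorphism of pairs $h^{(i)}_\epsilon:(X_i,A_i)\to(X_{i-1},A_{i-1})$ with $\|h^{(i)}_\epsilon - f_i\|<\epsilon$. In particular $h^{(i)}_\epsilon$ restricts to a homeomorphism $A_i\to A_{i-1}$ that is $\epsilon$-close to $f_i|_{A_i}$, so $f_i|_{A_i}$ is a near-homeomorphism of the $A_i$; since the $A_i$ are all homeomorphic to the compact metric space $T$, Brown's theorem gives $\varprojlim(A_i, f_i|_{A_i})\cong T$. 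The subtlety is that Brown's theorem as quoted requires the $X_n$ to be homeomorphic to a compact \emph{metric} space; since $S$ here is only assumed compact, I would either add the (harmless, and satisfied in the application where $S=S^3$) metrizability hypothesis, or observe that $\varprojlim X_i$ is metrizable because it is a closed subspace of $\prod X_i$ and each $X_i$, being homeomorphic to the metrizable $A_i$-containing sphere, is metrizable --- I will simply assume $S$ is metric, matching the statement's evident intent.

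Next, the key step: run Brown's proof for the total spaces but \emph{with the subspaces carried along}. Brown's theorem is proved by choosing the approximating homeomorphisms $h^{(i)}$ fast enough (with $\|h^{(i)}-f_i\|$ shrinking geometrically) that the compositions $g_n = h^{(1)}\circ h^{(2)}\circ\cdots\circ h^{(n)}: X_n\to X_1$ converge uniformly, along with $g_n$ and the bonding maps, to yield a homeomorphism $\varprojlim(X_i,f_i)\to X_1\cong S$. The point is that if at every stage we pick $h^{(i)}$ to be a homeomorphism \emph{of pairs} (possible precisely because $f_i$ is a near-homeomorphism of pairs), then every $g_n$ carries $A_n$ onto $A_1$, and the limiting homeomorphism $\Psi: \varprojlim X_i \to X_1$ carries $\varprojlim A_i$ onto $A_1$. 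Combined with a choice of homeomorphism $X_1\cong S$ carrying $A_1$ to $T\subset S$, this gives the homeomorphism of pairs $(X,A)\to(S,T)$. Concretely I would state this as: ``Inspecting the proof of Theorem~\ref{Brown} in \cite{B}, one sees that the approximating homeomorphisms may be chosen from any dense-enough supply; choosing them to be homeomorphisms of pairs yields the conclusion for pairs.'' Alternatively, if one prefers a black-box argument, apply Brown's theorem to $X=\varprojlim X_i$ to get $\Psi:X\to S$ and separately to $A=\varprojlim A_i$ to get $\Psi':A\to T$, then argue that $A$ is a closed subset of $X$ and use the topological homogeneity / uniqueness up to isotopy only in the application --- but this does not by itself make the two homeomorphisms compatible, so the first route is cleaner.

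The main obstacle I anticipate is exactly this compatibility issue: Brown's theorem is a statement about a single inverse system, and applying it twice produces two unrelated homeomorphisms, with no reason for the restriction of the total-space homeomorphism to the subspace to agree with the subspace homeomorphism. The honest fix is to not treat Brown's theorem as a black box but to re-run its proof (which is short: it is an iterated-approximation / Anderson-trick style argument) in the category of pairs, checking that each ingredient --- the triangle inequality estimates ensuring uniform convergence of $g_n$ and of $g_n^{-1}$, and the identification of the limit map --- goes through verbatim when ``homeomorphism'' is replaced by ``homeomorphism of pairs'' and ``$\|\cdot\|$'' by the sup-metric on the pair. Since the class of homeomorphisms of pairs $(X_i,A_i)\to(X_{i-1},A_{i-1})$ is, by hypothesis, dense near $f_i$ in exactly the sense Brown's argument needs, there is no new analytic content; the only thing to be careful about is bookkeeping the convergence on $X$ and on $A$ at once, which the product metric on $\prod(X_i\times A_i)$ handles automatically. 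I would therefore present the proof as: verify $f_i|_{A_i}$ is a near-homeomorphism and note metrizability; then say the proof of Theorem~\ref{Brown} applies mutatis mutandis with approximating homeomorphisms taken to be of pairs, yielding simultaneously $\varprojlim X_i\cong S$ and, via the restricted maps, $\varprojlim A_i\cong T$ with the former restricting to the latter; conclude $(X,A)\cong(S,T)$.
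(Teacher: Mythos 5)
Your proposal is correct and takes essentially the same route as the paper: rather than apply Brown's theorem as a black box twice (which, as you rightly note, produces two incompatible homeomorphisms), the paper re-runs Brown's argument in the category of pairs, restating his two intermediate results for pairs $(X_i,A_i)$ and taking the approximating class $K_i$ to consist of homeomorphisms of pairs. The only point the paper singles out as needing separate verification --- that the limit map $F$ surjects onto $\varprojlim A_i$, which uses the Eilenberg--Steenrod fact $\bigcap_j f_i^j(A_j)=f_i^\infty(A)$ --- is implicit in your ``goes through verbatim'' but would be worth flagging explicitly.
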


\begin{proof} To prove this, let us consider the following results by M. Brown \cite{B}. 

\begin{prop}[Brown] \label{thm} Let $X = \underset{\longleftarrow}{\lim}(X_i, f_i)$ and $Y =  \underset{\longleftarrow}{\lim}(X_i,g_i)$, where $X_i$ are compact metric spaces. Suppose $||f_{i+1} - g_{i+1}|| < a_i$, $i = 1, 2, \ldots $, where $(a_i)$ is a Lebesgue sequence for $(X_i, g_i)$. Then the function $F_N : X \rightarrow X_N$ defined by $F_N = \underset{n \rightarrow \infty}{\lim}\ g^n_{N}f^\infty_{n}$ is well defined and continuous. Moreover, the function $F : X \rightarrow Y$ defined by $F(s) = (F_1(s), F_2(s), \ldots )$ is well defined, continuous, and onto.  
\end{prop} 
 
A \textit{Lebesgue sequence} for a sequence of spaces $(X_i, g_i)$ is a sequence $(a_i)$ of positive real numbers such that there is another sequence $(b_i)$ of positive real numbers which satisfies: (1) $\sum b_i < \infty$ and (2) whenever $x, y \in X_j, i< j,$ and $|x-y| <a_j$, then $|f^j_{i}(x) - f^j_{i}(y)| <b_j$.  
 
\vspace{2mm}
Most of the proof of Prop. \ref{thm} follows through for pairs $(X_n, A_n)$. One only needs to check that the map $F: \underset{\longleftarrow }{\lim} (A_i, f_i) \rightarrow \underset{\longleftarrow}{\lim} (A_i, g_i)$ is onto. The proof uses the fact that for any $i >0$, $\underset{j=i}{\overset{\infty}{\cap}}\ f^j_{i}(X_j) = f^\infty_{i} (X)$ (from Corollary 3.8 Chapter VIII of \cite{ES}). The same holds true for the sequence of subspaces $(A_i)$, that is  $\underset{j=i}{\overset{\infty}{\cap}}\ f^j_{i}(A_j) = f^\infty_{i} (A)$. Hence, a similar argument as for surjectivity of $F: \underset{\longleftarrow}{\lim} (X_i, f_i) \rightarrow \underset{\longleftarrow}{\lim} (X_i, g_i)$ shows that given $b  \in \underset{\longleftarrow}{\lim} (A_i, g_i)$, there is $a \in A$ such that $F(a) = b$. This gives the following result.
 
\begin{prop} Let $(X, A) = \underset{\longleftarrow}{\lim}(X_i,A_i, f_i)$ and $(Y,B) =  \underset{\longleftarrow}{\lim}(X_i,A_i, g_i)$, where $X_i$ and $A_i$ are compact metric spaces. Suppose $||f_{i+1} - g_{i+1}|| < a_i$, $i = 1, 2, \ldots $, where $(a_i)$ is a Lebesgue sequence for $(X_i, g_i)$. Then the function $F_N : (X,A) \rightarrow (X_N, A_N)$ defined by $F_N = \underset{n \rightarrow \infty}{\lim}\ g^n_{N}f^\infty_{n}$ is well defined and continuous, and further, the function $F : (X,A) \rightarrow (Y,B)$ defined by $F(s) = (F_1(s), F_2(s), \ldots )$ is well defined, continuous, and onto.  
\end{prop} 

 
The following theorem shows that $F$ is in fact a homeomorphism under some extra conditions on the maps $f_n$. 

\begin{thm}[Brown] Let $X= \underset{\longleftarrow}{\lim}(X_i, f_i)$, where $X_i$ are compact metric spaces. For $i\ge 2$ let $K_i$ be a nonempty collection of maps from $X_i $ into $X_{i-1}$. Suppose that for each $i\ge 2$ and $\epsilon >0$ there is $g\in K_i$ such that $||f_i - g|| < \epsilon$. Then there is a sequence $(g_i) $ with $g_i \in K_i$ such that $X $ is homeomorphic to $\underset{\longleftarrow}{\lim}(X_i, g_i). $ 
\end{thm} 

This theorem follows from Prop. \ref{thm}. 
In case of pairs $(X_n, A_n)$, injectivity of $F$ on $A =\underset{\longleftarrow }{\lim} (A_n, f_n) $ follows from the injectivity on the ambient space $X $. Similarly, continuity of the inverse also follows and we get the following result.

\begin{prop} \label{propn} Let $(X,A) = \underset{\longleftarrow}{\lim}(X_i, A_i, f_i)$, where $X_i$ and $A_i$ are compact metric spaces. For $i\ge 2$ let $K_i$ be a nonempty collection of maps from $(X_i, A_i) $ into $(X_{i-1}, A_{i-1})$. Suppose that for each $i\ge 2$ and $\epsilon >0$ there is $g\in K_i$ such that $||f_i - g|| < \epsilon$. Then there is a sequence $(g_i) $ with $g_i \in K_i$ such that $(X, A) $ is homeomorphic to $\underset{\longleftarrow}{\lim}(X_i, A_i, g_i). $ 
\end{prop}

It is not hard to see that Lemma \ref{thm for pairs} follows from Prop. \ref{propn} by taking $K_i$ to be the set of homeomorphisms $(X_i, A_i)  \rightarrow (X_{i_1}, A_{i_1})$.

 \end{proof}

Now we are in a position to prove Theorem \ref{localglobal}. 

\begin{proof}
If $lk_F(x)$ is an unknot in $lk_{\tilde M}(x)$, then by Lemma \ref{lem1}, $F\cap S_x(r) \hookrightarrow S_x(r)$ is an unknot for every $r>0$. By Lemma \ref{lem2}, the geodesic retraction map $\rho^r_{s} : (S_x(r) , S_x(r) \cap F) \rightarrow (S_x(s) , S_x(s) \cap F)$ is a near-homeomorphism of pairs for every pair $r>s$.

Now, we know that $(\Dinf \tilde M, \Dinf F) = \underset{\longleftarrow}{\lim} ((S_x(r) , S_x(r) \cap F), \rho_r)$.
Since $\rho_s^{r}$ is a near-homeomorphism of pairs for every $r>s$, by Lemma \ref{thm for pairs}, we have that $(\Dinf M, \Dinf F)$ is homeomorphic to the pair $(S^3, S^1)$ with $S^1 \hookrightarrow S^3$ being an unknot. 
 
\end{proof}

The following corollary generalizes Theorem \ref{localglobal} for links whose components are all unknots.
\begin{cor}
\label{cor}
Let $L = (l_1, \ldots , l_m)$ be an $m$-component link in $S^3$ and $\Sigma$ be the triangulation of $S^3$ of type $L$. Let $M$ be the manifold as defined in Section 2. Let $\{ F_1, F_2, \ldots, F_m \} $ be the collection of flats such that $lk_{F_i}(x)$ is a copy of the $i$-th component $ l_i$, and further, the link given by $(lk_{F_1}(x), \ldots, lk_{F_m}(x) )$ is isotopic to $L$. 

If each $l_i$ is an unknot, then there is a homeomorphism of pairs $$(\Dinf \tilde M, \amalg \Dinf F_i ) \cong (lk_{\tilde M} (x) , \amalg lk _{F_i} (x)  ).$$ In particular, $(\Dinf F_1 , \ldots, \Dinf F_m )$ is isotopic to $L$.
\end{cor}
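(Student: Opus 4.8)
\textbf{Proof proposal for Corollary \ref{cor}.}

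The plan is to follow the same inverse-limit strategy used in the proof of Theorem \ref{localglobal}, but carrying along the full multi-component link $\amalg_i (S_x(r)\cap F_i)$ inside each sphere $S_x(r)$ rather than a single unknotted circle. First I would establish the analogue of Lemma \ref{lem1}: for every $r>0$, the ordered collection $(S_x(r)\cap F_1,\dots,S_x(r)\cap F_m)\hookrightarrow S_x(r)$ is a link isotopic to $L$. The base case is the same as before, since for sufficiently small $\delta>0$ the pair $(S_x(\delta),\amalg_i(S_x(\delta)\cap F_i))$ is homeomorphic to $(lk_{\tilde M}(x),\amalg_i lk_{F_i}(x))$, and by hypothesis the latter link is isotopic to $L$. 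For the inductive step, the radial/gradient-flow homeomorphism $\eta\colon S_x(R)\to S_x(r)$ constructed in Case 1 of Lemma \ref{lem1} is cellular with respect to the cubulation, so it carries each 2-subcomplex $F_i$ to $F_i$ simultaneously; hence it maps the link $\amalg_i(S_x(R)\cap F_i)$ to $\amalg_i(S_x(r)\cap F_i)$, preserving the isotopy type. In Case 2, when $S_x(R)$ contains a vertex $v$, that vertex lies on at most one flat $F_j$ (because $\Sigma$ has isolated squares, so each vertex lies in at most one square), so only the $j$-th component is affected by the connect-sum move $\lambda\colon S_x(r)\to S_x(R)\#S_v(\epsilon)$; the other components pass through $\rho$ unchanged. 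Since $S_v(\epsilon)\cap F_j\cong lk_{F_j}(v)$ is an unknot (as $l_j$ is an unknot), forming the connect sum of the $j$-th component with an unknot does not change the isotopy type of the whole link, and the remaining components are untouched. If $S_x(R)$ meets several vertices, each lies on at most one flat and we iterate.

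Next I would prove the analogue of Lemma \ref{lem2}: if $\amalg_i(S_x(r)\cap F_i)$ is isotopic to $L$ for every $r>0$, then the geodesic retraction $\rho^r_s\colon (S_x(r),\amalg_i(S_x(r)\cap F_i))\to (S_x(s),\amalg_i(S_x(s)\cap F_i))$ is a near-homeomorphism of pairs for every $r>s>0$. The argument is the component-wise version of the proof of Lemma \ref{lem2}: approximate $\rho$ by a diffeomorphism $\overline\rho$; then $\overline\rho$ carries the link $\amalg_i(S_x(r)\cap F_i)$ to a flat link $L'$ in $S_x(s)$ that is ambiently isotopic to $\amalg_i(S_x(s)\cap F_i)$ (both being isotopic to $L$), but not equal to it on the nose. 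Subdivide each component into short flat arcs, slide endpoints into place by the Homogeneity lemma using pairwise-disjoint small neighborhoods (disjointness across components as well as within a component), and then apply the Martin--Rolfsen theorem componentwise and arc-by-arc to push the arcs of $\overline\rho$-image onto the arcs of $\amalg_i(S_x(s)\cap F_i)$, all by homeomorphisms $5\epsilon_0$-close to the identity supported in disjoint compact sets. Composing and estimating exactly as in Lemma \ref{lem2} yields a homeomorphism of pairs $\psi$ with $\|\psi-\rho^r_s\|<\epsilon$.

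Finally, I would invoke Lemma \ref{thm for pairs} (with $S=S^3$, $T=\amalg_{i=1}^m S^1$, a disjoint union of $m$ circles, which is a compact metric space): since $(\Dinf\tilde M,\amalg_i\Dinf F_i)=\varprojlim\big((S_x(r),\amalg_i(S_x(r)\cap F_i)),\rho^r_s\big)$ and every bonding map is a near-homeomorphism of pairs, the inverse limit is homeomorphic to $(S_x(r),\amalg_i(S_x(r)\cap F_i))$ for any fixed $r$, which in turn is homeomorphic to $(lk_{\tilde M}(x),\amalg_i lk_{F_i}(x))$ via a small-radius sphere, and the latter link is isotopic to $L$ by hypothesis. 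This gives the homeomorphism of pairs and the conclusion that $(\Dinf F_1,\dots,\Dinf F_m)$ is isotopic to $L$.

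\textbf{Main obstacle.} The routine content is all present in the single-knot case; the genuinely new point to get right is the bookkeeping in Case 2 of the Lemma \ref{lem1} analogue, namely that the isolated-squares hypothesis on $\Sigma$ forces each vertex to lie on at most one of the flats $F_i$, so the connect-sum correction at a vertex touches only one component. One must also check that ``connect sum with an unknot'' combined with ``leave the other components fixed'' genuinely preserves the ambient isotopy type of the whole link in $S_x(R)\#S_v(\epsilon)\cong S^3$ — this is where the $l_i$ being unknots (rather than merely the components being individually simple) is used, and it is the only place the hypothesis of the corollary enters beyond the single-component theorem.
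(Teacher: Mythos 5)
The paper does not give a separate proof of Corollary~\ref{cor}; it is stated as a direct generalization of Theorem~\ref{localglobal}, so your proposal is essentially the intended argument: carry the multi-component link $\amalg_i (S_x(r)\cap F_i)$ through the three steps (Lemma~\ref{lem1} analogue, Lemma~\ref{lem2} analogue, Lemma~\ref{thm for pairs} with $T = \amalg_{i=1}^m S^1$). You also correctly identify where the hypothesis that each $l_i$ is unknotted is used, namely in the connect-sum step of Case~2, and you correctly note that the inductive step of Case~1 acts componentwise because the gradient-flow homeomorphism is cellular and each $F_i$ is a subcomplex.

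The one place where your justification is not quite right as written is the claim that a vertex $v \in S_x(R)$ lies on at most one of the flats $F_1,\dots,F_m$. You attribute this to ``$\Sigma$ has isolated squares, so each vertex lies in at most one square,'' but the isolated-squares condition concerns vertices of $\Sigma$, and $\Sigma$-vertices correspond to \emph{edge directions} at a point of $\tilde M$, not to vertices of $\tilde M$; it is not immediate that a vertex of $\tilde M$ can lie in at most one of the chosen flats. The missing step is a convexity argument: the flats $F_i$ all pass through $x$, so if $v \in F_i \cap F_j$ with $v \neq x$ then the geodesic segment $[x,v]$ lies in both $F_i$ and $F_j$ (flats are convex), and hence the direction of $[x,v]$ at $x$ lies in both $lk_{F_i}(x) = \Box_i$ and $lk_{F_j}(x) = \Box_j$. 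The isolated-squares condition forces the squares $\Box_i$ and $\Box_j$ to be disjoint as subcomplexes of $\Sigma$ (two $4$-cycles sharing any point must share a vertex), which is a contradiction. With that supplement the argument is complete, and everything else you outline — in particular the observation that $\rho^r_s$ carries $F_i \cap S_x(r)$ onto $F_i \cap S_x(s)$ for each $i$, that the Martin--Rolfsen moves and Homogeneity-lemma slides can be performed in pairwise disjoint small neighborhoods across components, and that connect-summing one component with an unknot inside a ball disjoint from the other components preserves the ambient isotopy type of the whole link — is correct.
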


\section{New obstruction}
\label{new obstruction}

Theorem \ref{localglobal} shows that an unknot in $S^3$ does not give an obstruction to Riemannian smoothing of $M$. Hence we now consider links with more than one components and produce a new obstruction which does not depend on the knottedness of individual components. We first recall a few properties of links. 

Let $l_1$ and $l_2$ be disjoint oriented curves in $S^3$. The \textit{linking number}, $Lk(l_1, l_2)$, can be defined to be the oriented intersection number of $l_2$ with a smooth bounding disc for the curve $l_1$. This linking number has the property that it if $l_1 \sim l_1'$ are homotopic to each other in the complement of $l_2$, then $Lk(l_1, l_2)=Lk(l_1', l_2)$. In particular, the linking number is invariant under an orientation-preserving homeomorphism $\phi : S^3 \rightarrow S^3$. In fact, it is also invariant under orientation-preserving near-homeomorphisms 
(\cite{DJL}, Appendix).
\begin{prop} 
\label{linking number under near homeo}
Let $\phi : S^3 \rightarrow S^3$ be an orientation-preserving near-homeomorphism
. Let $l_1, l_2$ be a pair of disjoint oriented curves in $S^3$, such that $\phi \circ l_1$ and $\phi \circ l_2$ are also disjoint curves in $S^3$. Then $Lk(l_1, l_2) = Lk(\phi \circ l_1, \phi \circ l_2)$.  
\end{prop}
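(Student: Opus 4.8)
The plan is to reduce the statement about near-homeomorphisms to the already-known invariance of linking number under honest orientation-preserving homeomorphisms, by approximating $\phi$ closely enough by a homeomorphism that agrees with $\phi$ in the relevant homotopy-theoretic sense. First I would fix the two disjoint oriented curves $l_1,l_2\subset S^3$ and the images $\phi\circ l_1,\phi\circ l_2$, which are assumed disjoint. Since these four sets are compact and the two image curves are disjoint, there is a positive number $\delta$ equal to (a third of, say) the distance $d(\phi\circ l_1,\phi\circ l_2)$ in $S^3$; I would also want $\delta$ small enough that any curve $\delta$-close to $\phi\circ l_i$ is freely homotopic to $\phi\circ l_i$ within the $\delta$-neighborhood of $\phi\circ l_i$, in particular homotopic to it in the complement of $\phi\circ l_{3-i}$ (tubular-neighborhood / isotopy-extension reasoning).

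Next, using the hypothesis that $\phi$ is a near-homeomorphism, I would choose an orientation-preserving homeomorphism $h:S^3\to S^3$ with $\|h-\phi\|<\delta$. Then $h\circ l_i$ is $\delta$-close to $\phi\circ l_i$, so by the choice of $\delta$ it is homotopic to $\phi\circ l_i$ in the complement of $h\circ l_{3-i}$ — here I need that $h\circ l_{3-i}$ is itself $\delta$-close to $\phi\circ l_{3-i}$ and hence stays within the region where the homotopy lives, which follows from a slightly more careful bookkeeping of the neighborhoods (shrinking $\delta$ once more if necessary so that both $h\circ l_1$ and $h\circ l_2$ lie in disjoint $\delta$-neighborhoods of $\phi\circ l_1$ and $\phi\circ l_2$). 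By the homotopy invariance of linking number recalled just before the proposition, $Lk(h\circ l_1,h\circ l_2)=Lk(\phi\circ l_1,\phi\circ l_2)$. On the other hand, $h$ is an orientation-preserving homeomorphism of $S^3$, so $Lk(h\circ l_1,h\circ l_2)=Lk(l_1,l_2)$ by the standard invariance. Chaining the two equalities gives $Lk(l_1,l_2)=Lk(\phi\circ l_1,\phi\circ l_2)$, as desired.

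The main obstacle is the middle step: making precise that a sufficiently good homeomorphism approximation $h$ carries each $l_i$ to a curve that is homotopic to $\phi\circ l_i$ \emph{in the complement of the other image curve}. This requires a uniform choice of $\delta$ that simultaneously (i) separates the two image curves, (ii) is below the ``homotopy radius'' of a tubular neighborhood of each $\phi\circ l_i$, and (iii) controls $h\circ l_{3-i}$ so it does not obstruct the homotopy of $h\circ l_i$ to $\phi\circ l_i$. All three are routine once one invokes that $\phi\circ l_i$ is an embedded (indeed, since it is the image of a nice curve under a map approximable by homeomorphisms, tame) circle with a tubular neighborhood; I would simply set $\delta$ to be the minimum of the quantities coming from (i)–(iii). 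I would also remark that this is exactly the argument sketched in the Appendix of \cite{DJL}, so the proof can be kept brief, perhaps even deferred to that reference, but including the short argument above makes the present paper self-contained.
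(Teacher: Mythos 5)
Your proposed argument is correct, and it is the natural way to deduce the statement from the two facts the paper records just before it: invariance of linking number under free homotopy in the complement of the other curve, and invariance under honest orientation-preserving homeomorphisms. The paper does not give its own proof of this proposition --- it is cited directly to the Appendix of \cite{DJL} --- so there is no in-paper argument to compare with, but your approach is the standard one. One remark: the ``main obstacle'' you single out is not actually an obstacle, and the machinery you reach for (tameness of $\phi\circ l_i$, tubular neighborhoods, isotopy extension) is unnecessary. Once $\delta$ is less than, say, one third of $d(\phi\circ l_1,\phi\circ l_2)$ and you pick a homeomorphism $h$ with $\|h-\phi\|<\delta$, the short-geodesic straight-line homotopy from $h\circ l_i$ to $\phi\circ l_i$ stays inside the $\delta$-neighborhood of $\phi\circ l_i$, which is disjoint from the $\delta$-neighborhood of $\phi\circ l_{3-i}$ containing both $h\circ l_{3-i}$ and $\phi\circ l_{3-i}$; applying this once to each curve yields $Lk(h\circ l_1,h\circ l_2)=Lk(\phi\circ l_1,h\circ l_2)=Lk(\phi\circ l_1,\phi\circ l_2)$ with no regularity hypotheses on the image curves at all. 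The one point that does deserve an explicit sentence, and which you pass over, is that the approximating homeomorphism $h$ may be taken orientation-preserving: any homeomorphism with $\|h-\phi\|<\pi$ is homotopic to $\phi$ via the geodesic homotopy, hence has the same degree as $\phi$, namely $+1$.
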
 

Let $L = (l_1, \ldots, l_n)$ be a non-trivial link in $S^3$ with $n >1$ components. 
We refer to the collection $\{ Lk(l_i, l_j) |\  i \not= j \}$ of linking numbers 
as the \textit{pairwise linking numbers of $L$}, or more simply, the linking numbers of $L$.

A link $L$ in $S^3$  is called a \textit{great circle link} if every component of $L$ is a geodesic in the standard metric on $S^3$. Recall that, a geodesic $g$ in $S^3$ is the intersection of a 2-plane in $\R^4$ with the unit sphere, that is a great circle in $S^3$. 
It is known that if a link $L$ in $S^3$ is a great circle link then the pairwise linking numbers of $L$ are $\pm 1$ \cite{W}.

\vspace{3mm}
Let $L = (l_1, \ldots , l_n)$ be an $n$-component link in $S^3$. Let $\Sigma$ be a triangulation of $S^3$ of type $L$ guaranteed by Theorem \ref{Sigma}. 
We show that $M^4 \defeq P_\Sigma$ will be the desired manifold with locally CAT(0) metric that does not support any Riemannian metric with non-positive sectional curvature.

By definition of $\Sigma$, every component $l_i$ of the link $L$ is given by a square in the triangulation. The complex $P_\Box$ corresponding to this square is isometric to a flat torus, say $T_i^2$. Let $v$ be a vertex of the complex $P_\Sigma$. The link of $v$ in $T^2_i$, $lk_{T^2_i}(v)$, is simplicially isomorphic to the $i$-th component $l_i$. Thus, $\coprod lk_{T^2_i}(v)$ is isomorphic to the link $L$ in $lk_M(v) \cong S^3$.  
 
Lifting to the universal cover, we obtain 2-dimensional flats $F_i \hookrightarrow \tilde M$ corresponding to each flat torus $T^2_i$. Let $\tilde v$ be a lift of $v$ in $\tilde M$. Then $ \coprod lk_{F_i}(\tilde v) \hookrightarrow lk_{\tilde M}(\tilde v)$ gives a copy of the link $L$ in $S^3$. We have that $\partial ^\infty F_i \cong S^1$ and $\partial ^\infty \tilde M \cong S^3$. Thus, in the boundary, the embedding $\coprod \partial ^\infty F_i \hookrightarrow \partial ^\infty \tilde M$ forms a link, say, $L_\infty$ in $\partial ^\infty \tilde M$.  
 

The geodesic retraction $\rho : \partial ^\infty \tilde M \rightarrow lk_{\tilde M}(\tilde v)$ maps the link $L_\infty$ to $L$. We know that $\rho$ is a near-homeomorphism \cite{Ar}, and hence the pairwise linking numbers of $L_\infty$ are same as that of $L$ (by Prop. \ref{linking number under near homeo}).

Let $\Gamma = \pi _1 (M)$. Assume that there is a non-positively curved Riemannian manifold $M'$ with $\pi_1(M') \cong \Gamma$. As shown in Section 2, the isolated squares condition in $\Sigma$ ensures that the isolated flats condition holds in $\tilde M$. This is turn gives the existence of a $\Gamma$-equivariant homeomorphism $\phi: \partial ^\infty \tilde M \rightarrow \partial^\infty \tilde M'$. 

There exists a copy of $\Z^2$ in $\Gamma \cong \pi_1(M')$ such that the limit set of $\Z ^2$ is $\partial ^\infty F_i$. By the flat torus theorem, for each copy of $\Z ^2$ there exists a $\Z^2$-periodic flat $F_i' \subset \tilde M'$, with the property that $\partial ^\infty F_i'$ coincides with the limit set of $\Z^2$. The $\Gamma$-equivariant homeomorphism $\phi$ then maps $\partial ^\infty F_i$ to $\partial ^\infty F_i'$.
Thus we have a homeomorphism $\partial ^\infty \tilde M \rightarrow \partial ^\infty \tilde M'$ which maps the link $L_\infty$ to a link $L_\infty '$ in $\partial ^\infty \tilde M' \cong S^3$. 

Let us now focus on links with 2 components. Let $L=(l_1, l_2)$. By Prop. \ref{linking number under near homeo}, the linking number between $l_1$ and $l_2$ is preserved under the near-homeomorphism $\rho$. It is also preserved under the homeomorphism $\phi$.


If the linking number, $Lk(l_1, l_2)$, is non-zero, then the flats must intersect in a unique point, say $p$, in $\tilde M'$. Let us further assume that the linking number is not $\pm 1$. For example, consider the link in Figure \ref{lktwo} which has linking number 2.
 
\begin{wrapfigure}{r}{0.45\textwidth}
\centering
\includegraphics[scale=0.8]{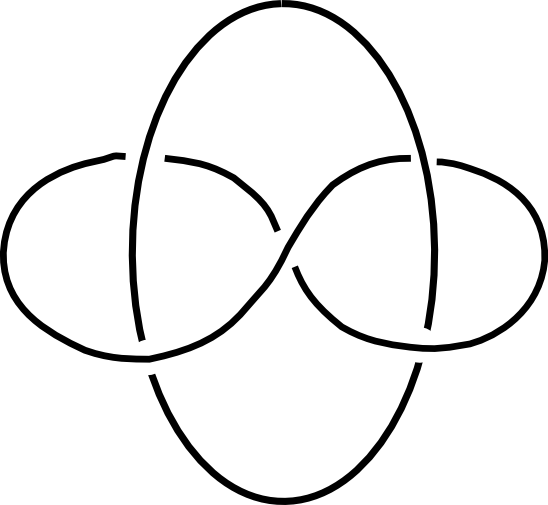}
\caption{Linking number = 2}
\vspace{-4mm}
\label{lktwo}
\end{wrapfigure}
The geodesic retraction map $\rho'$ from $\partial ^\infty \tilde M'$ to the unit tangent space $T_p\tilde M'$ maps $L_\infty '$ homeomorphically to some link $L'$ in $T_p \tilde M'$. A component $l_i'$ of this link is given by, $T_pF_i'$, the unit tangent space of $F_i'$ at $p$. Since the linking number is preserved under the homeomorphism $\rho'$, the link $(l_1', l_2') $ has linking number =2. However, $\coprod T_p F_i' \hookrightarrow T_p \tilde M'$ is a great circle link and, as noted earlier, great circle links have linking numbers $=\pm 1$. Thus, this gives a contradiction to the existence of the Riemannian manifold $M'$. 

\vspace{3mm}
\begin{wrapfigure}{r}{0.45\textwidth}
\begin{center}
\includegraphics[scale=0.8]{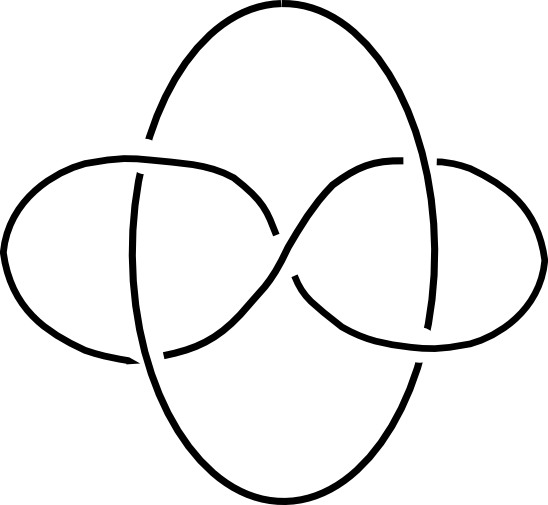}
\caption{Whitehead link, \\   Linking number = 0}
\label{Whitehead}
\end{center}
\end{wrapfigure}
Now, if the linking number is zero, then the flats will not intersect. If they intersected, then by the above argument we will obtain a link $L'$ in the unit tangent space $T_p\tilde M$ that is isotopic to $L'_\infty$. Hence, the linking number of $L'$ will be the same as $L$, zero. However, being a great circle link, the linking number of $L'$ must be $=\pm 1$. 

Let $p$ be a point in $\tilde M'$ which does not lie on either of the flats and is equidistant from both the flats. Let $D$ be the distance of each flat from $p$. Observe that for $r < D$, $S_p(r)$ does not intersect either of the flats. Further, there exists $r_0 > D$ such that the circles $S_p(r_0) \cap F_i$ form an unlink. 

For $r > D$, let $L'_r \defeq ( S_p(r) \cap F_1, S_p(r)\cap F_2)$ be a link in $S_p(r)$. Then $L'_{r_0}$ is an unlink. Since for $r> r_0$, $\rho^r_{r_0} : (S_p(r) , L'_r) \rightarrow (S_p(r_0), L'_{r_0})$ is a homeomorphism of pairs, every $L'_r$ ($r > r_0$) must be an unlink. 

By Corollary \ref{cor},  $(\Dinf \tilde M, L'_\infty) = \underset{\longleftarrow}{\lim}\ ((S_p(r) , L'_r), \rho_r)$ must be homeomorphic to $(S^3, U)$ where $U $ be an unlink with two components. But we know that $L'_\infty$ is a non-trivial link. This gives a contradiction. 



An example of a non-trivial 2-component link with linking number zero is given by the Whitehead link as shown in Figure \ref{Whitehead}. 

Thus 2-component links with linking number $\not= \pm 1$ give us examples of 4-manifolds with locally CAT(0) metric, but no Riemannian metric with non-positive sectional curvature.  

\vspace{3mm}
Now suppose $L = (l_1, \ldots , l_n)$ is a link with more than 2 components. If one of the linking numbers, say $Lk(l_i, l_j)$, is not equal to $\pm 1$, then we have a 2-component link $(l_i, l_j)$ which satisfies the conditions of one of the cases above. Arguments same as above lead to a contradiction, showing that the manifold corresponding to $L$ cannot support a non-positively curved Riemannian metric.


\section{Examples}

In this section we look at some specific examples of links that will give us the required obstruction.  \\

\noindent (1) Links with some pairwise linking number $\ge 2$. 

\begin{figure}[h]
\begin{subfigure}[b]{0.3\textwidth}
\centering
\includegraphics[scale=0.8]{lk2}
\vspace{4mm}
\caption{Link with 2 components with linking number = 2}
\end{subfigure}
\qquad\qquad\quad\qquad
\begin{subfigure}[b]{0.3\textwidth}
\centering
\includegraphics[scale=0.35]{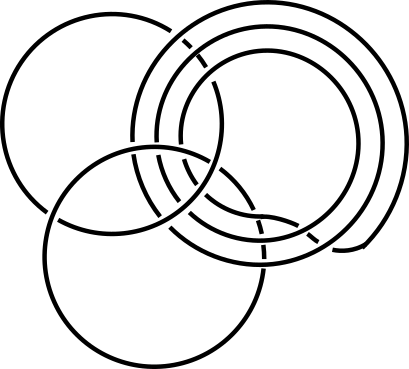}
\vspace{2mm}
\caption{Link with 3 components with linking numbers = 1, 3, 3}
\end{subfigure}
\end{figure}



\noindent (2) Links with some pairwise linking number $= 0$.   \\

\noindent (a) Links whose pairwise linking numbers belong to $\{ 0, 1 \}$.

\begin{wrapfigure}{r}{0.45\textwidth}
\vspace{-5mm}
\centering
  \includegraphics[scale=0.75]{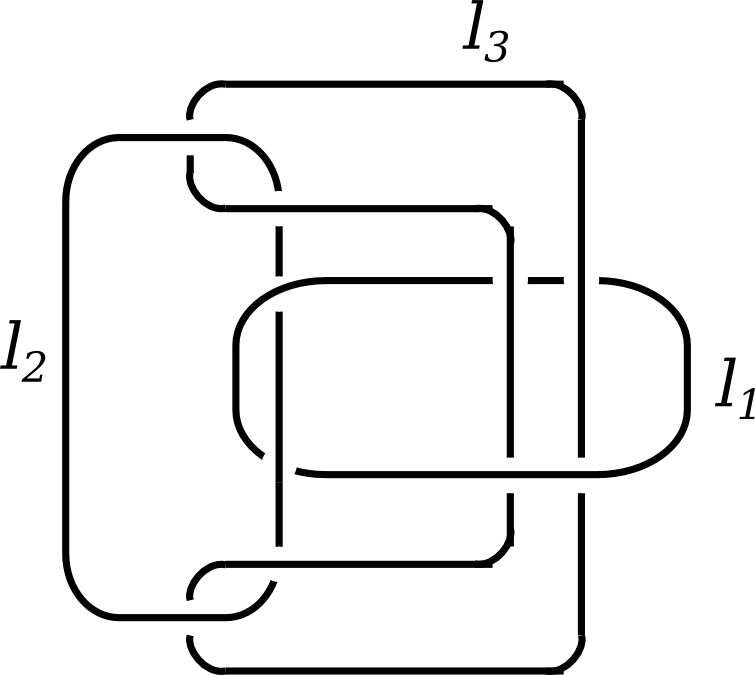}
  \caption{}
\label{lk=0,1}
  \end{wrapfigure}
Consider the link $L = (l_1, l_2, l_3) $ from Figure \ref{lk=0,1}. It has linking numbers  $Lk(l_1, l_2) = 1$ and $Lk (l_i, l_3) = 0$ for $i =1,2$.

As discussed before we have the geodesic retraction $\rho: \Dinf \tilde M \rightarrow lk_{\tilde M}(\tilde v)$ such that $\rho (L_\infty) = L$, where $\tilde v $ is a vertex in $\tilde M$ and $L_\infty $ is the link $\coprod \Dinf F$ in $\Dinf \tilde M$.

If there is a Riemannian manifold $M'$ with $\Gamma = \pi_1(M) \cong \pi_1(M')$, then as before, there is a $\Gamma$-equivariant homeomorphism $\phi: \partial ^\infty \tilde M \rightarrow \partial^\infty \tilde M'$ which maps $L_\infty$ to another link, say $L'_\infty \subset \Dinf \tilde M'$. The linking numbers are preserved under the near-homeomorphism $\rho$ (by Prop. \ref{linking number under near homeo}) and the homeomorphism $\phi$.

Since $Lk(l_1, l_2) = 1$, the corresponding flats $F_1'$ and $F_2'$ intersect in a point, say $p$. Let the corresponding components of $L'_\infty$ be $l'_{1,\infty}, l'_{2,\infty}$ and $l'_{3,\infty}$. Then the geodesic retraction map $\rho ' : \partial ^\infty \tilde M' \rightarrow T_p\tilde M'$ maps $l'_{1,\infty}, l'_{2,\infty}$ to the components $l_1'$ and $l_2'$. The link $(l_1', l_2')$ is a great circle link and hence has linking number =1. On the other hand, since $Lk (l_i, l_3) = 0$ ($i = 1, 2$), the flat $F_3'$ will not intersect with either of the flats $F_1'$ and $F_2'$. Let $r_0$ be the distance between $p$ and the flat $F_3'$. Then there exists $r>r_0$ such that $S_p(r) \cap (\coprod F_i' )$ is the link $L'_r$ as shown in Figure \ref{Ex3}. Then the geodesic retraction will map $L_\infty '$ to $L'_r$ in $S_p(r)$ homeomorphically, which is a contradiction for the following reason. 

 \begin{wrapfigure}{r}{0.47\textwidth}
\vspace{-6mm}
\centering
  \includegraphics[scale=0.85]{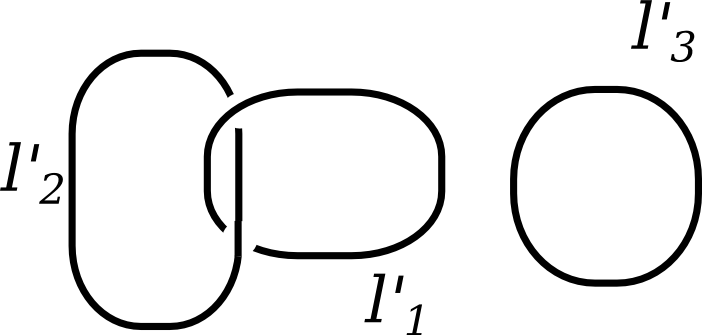}
  \caption{}
  \label{Ex3}
\vspace{-1mm}
  \end{wrapfigure}
The given link $L$ has each component homeomorphic to an unknot and $\rho$ is a near-homeomorphism. Hence, by Corollary \ref{cor}, we have that $L_\infty$ is isotopic to $L$. Since $\phi$ is a homeomorphism, we have that $L'_\infty$ is also isotopic to $L$. 
However, we know that the two links are not isotopic, showing that $M$ cannot support smooth Riemannian metric.\\ 


\noindent (b) Links with all pairwise linking numbers = 0

Besides the Whitehead link (Figure \ref{Whitehead}), examples of such links are given by Brunnian links. An $n$-component link is said to be a \textit{Brunnian link} if it becomes an unlink after removing any one of its components. Thus, every pair of components forms an unlink and hence the pairwise linking numbers of a Brunnian link are zero. The \textit{Borromean rings} are an example of a Brunnian link with 3 components (Figure \ref{Borromean}). 
An example of a family of Brunnian links is given by Milnor \cite{Mil1} as shown in Figure \ref{Brunnian}.

\begin{figure}[h]
\vspace{3mm}
\begin{subfigure}[b]{0.3\textwidth}
	\centering
	\includegraphics[width = 0.65\linewidth]{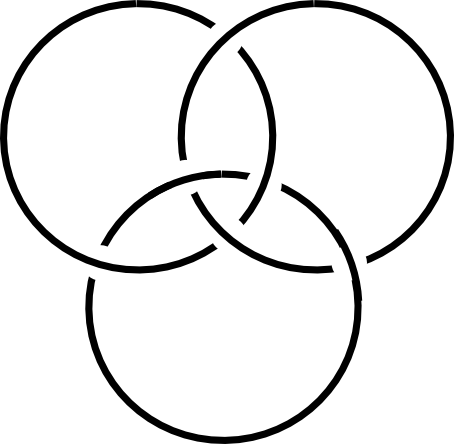}
\vspace{4mm}
	\caption{Borromean rings}
	\label{Borromean}
\end{subfigure}\quad
\begin{subfigure}[b]{0.65\textwidth}
	\centering
	\includegraphics[width = 1\linewidth]{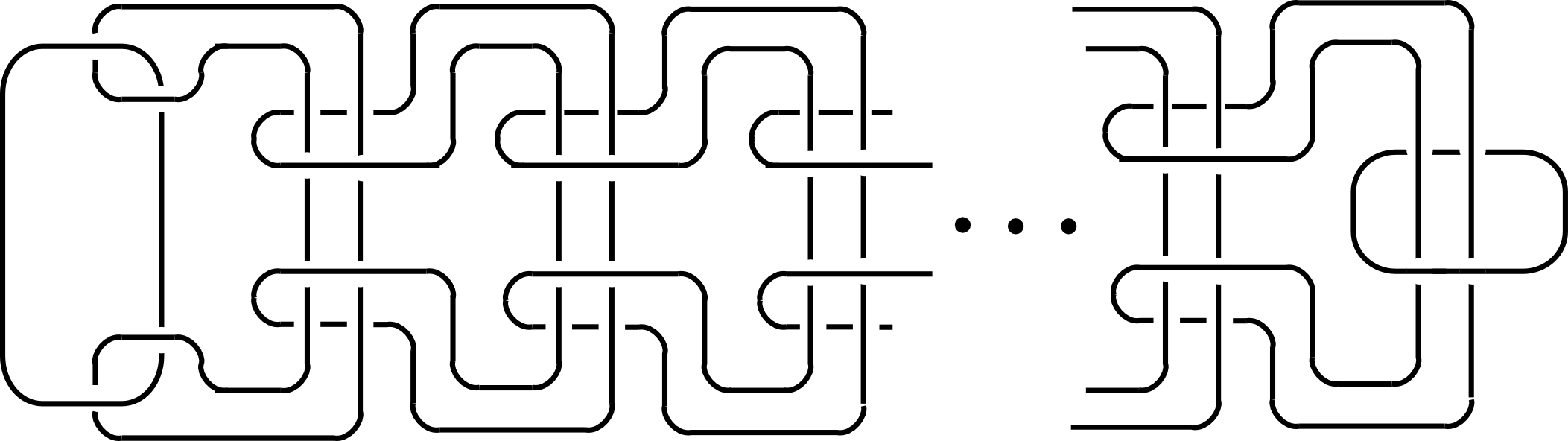}
\vspace{2mm}
	\caption{Brunnian link with $m$ components, $B_m$  ($m \ge 3$)}
	\label{Brunnian}
	\end{subfigure}
	\caption{Non-trivial links with pairwise linking numbers $= 0$}
	\label{Lk0}
\end{figure}

More examples of  nontrivial links with all the linking numbers $=0$ are given by the Whitehead doubles of the Brunnian links \cite{S}. An example of a Whitehead double with 3 components is given in Figure \ref{Whitehead double}. The Whitehead doubles with even number of twists have the additional property that they are link homotopically trivial. 


  
 \begin{figure}[h]
  \includegraphics[scale=0.7]{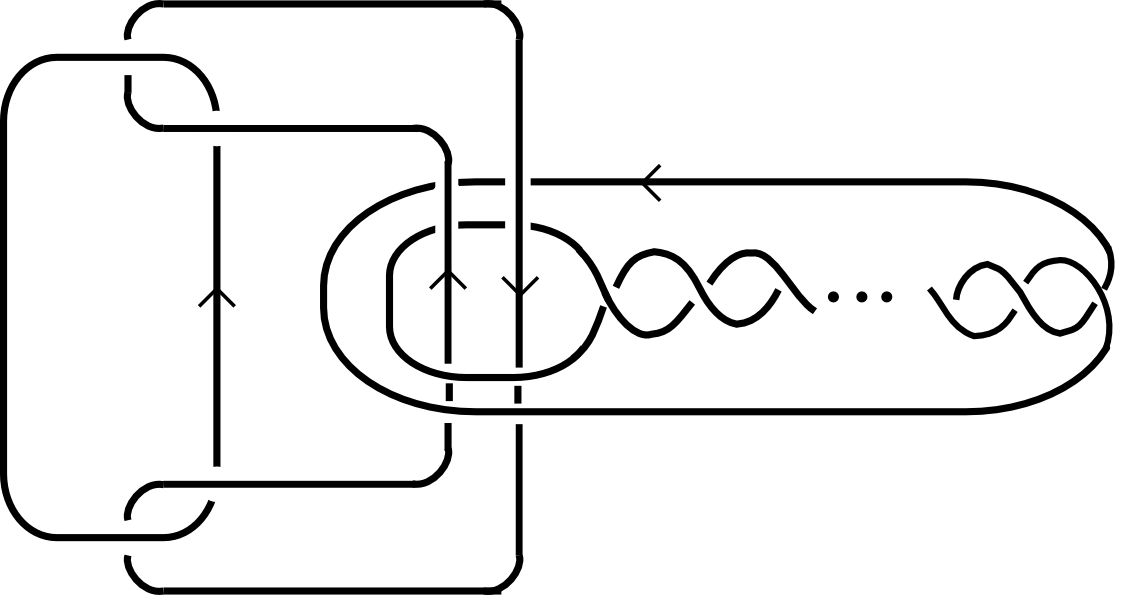}
  \caption{Whitehead double of Brunnian link $B_3$ with $n$ ($n$ even) twists}
\label{Whitehead double}
  \end{figure}


\section{Concluding Remarks}

\subsection{Linking number = 1} 
In this article we have developed examples of manifolds using links with linking numbers = 0 or $\ge 2$. This is because great circle links have linking numbers = 1 and great circle links do not obstruct Riemannian smoothings. However, if there exist links with linking  numbers = 1 that are not isotopic to any great circle link, then one would like to use them to produce examples of manifolds with the desired properties. The existence of such links can be shown by using results in \cite{S}. Theorem 1 in \cite{S} states that for a fixed $m$, there exist infinitely many $m$-component links with all linking numbers = 1. But since there are only finitely many great circle links with $m$ components, there must be infinitely many links with linking numbers = 1 that are not isotopic to any great cirlce link. These links also have the property that every proper sublink is isotopic to a great circle link.

Let $L$ be such a link, and let $P_\Sigma$ be the triangulation of $S^3$ of type $L$. The links in \cite{S} further have the property that each component is unknotted. Then Corollary \ref{cor} 
lets us conclude that $L_\infty \hookrightarrow \Dinf \tilde M$ is isotopic to $L$. Hence all linking numbers of $L_\infty$ are =1, but $L_\infty$ is not a great circle link. Now, if there is a Riemannian manifold with non-positive sectional curvature, $M'$, such that $\pi_1 (M) \cong \pi_1(M')$, then we get a $\pi_1(M)$-equivariant homeomorphism $\phi : \Dinf \tilde M \rightarrow \Dinf \tilde M'$ that carries $\Dinf F_i$ to $\Dinf F_i'$ where $F_i'$ are flats in $\tilde M'$. 
 If all the flats $F_i'$ intersect in a single point, then arguments as in Section 4 follow to show that there is a link in $T_p\tilde M'$ for some $p \in \tilde M'$ which is a not a great circle link, and we get a contradiction. 
 We know that each pair of flats $F_i', F_j'$ will intersect in a unique point in $\tilde M'$. However, there is no way to know if all the flats $F_i'$ will intersect in a single point. 
  
  \vskip 10pt
\noindent \textbf{Question:} Does a link $L$ as above 
provide an obstruction to Riemannian smoothing of locally CAT(0) 4-manifolds?

  \vskip 10pt
Note that if $L$ does provide an obstruction, then it would yield an example of a CAT(0) 4-manifold where no single flat or a pair of flats obstruct the smoothing, but where a triple of flats obstructs it.

\subsection{Further Questions}

Here we point out a few interesting questions that arise from this work. The examples that are given in this article and in \cite{DJL} are locally CAT(0) 4-manifolds whose universal covers are diffeomorphic to $\R^4$. In contrast, the examples given by Davis-Januszkiewicz \cite{DJ} for dimensions $\ge 5$ are closed aspherical manifolds whose universal covers are not homeomorphic to $\R^n$. So we reiterate the question posed by Davis-Januszkiewicz-Lafont \cite{DJL}.\\
Can one find locally CAT(0) closed 4-manifolds $M^4$ with the property that their universal covers $\tilde M^4$ are
\begin{enumerate}
\item not homeomorphic to $\R^4$?
\item homeomorphic, but not diffeomorphic to $\R^4$?
\end{enumerate}

Another interesting question is whether one can produce similar examples in the CAT(-1) setting. The construction presented here and in \cite{DJL} depends on the existence of flats. If one doesn't have flats, we have the following question. \\
Can one construct examples of smooth, locally CAT($-1$) manifolds $M^n$ with the property that $\Dinf \tilde M$ is homeomorphic to $S^{n-1}$, but which do not support any Riemannian metric of non-positive sectional curvature?


\begin{thebibliography}{1}

\bibitem{Ar} S. Armentrout, {\em Cellular decompositions of 3-manifolds that yield 3-manifolds}, Mem. of the Amer. Math. Soc. 107. American Mathematical Society, Providence, 1971. 

\bibitem{B} M. Brown, {\em Some Applications of an Approximation Theorem for Inverse Limits}, Proceedings of the American Mathematical Society Vol. 11, No. 3 (Jun., 1960), pp. 478-483

\bibitem{C} P.-E. Caprace, {\em Buildings with isolated subspaces and relatively hyperbolic Coxeter groups} Preprint available on the arXiv:math/0703799

\bibitem{CK} C. Croke and B. Kleiner, {\em Spaces with non-positive curvature and their ideal boundaries}, Topology 39 (2000), 549 - 556

\bibitem{D} M. W. Davis., {\em The geometry and topology of Coxeter groups} London Mathematical
Society Monographs Series, 32. Princeton University Press, 2008.

\bibitem{DJL} M. Davis, T. Januszkiewicz and J.-F. Lafont, {\em 4-dimensional locally CAT(0)-manifolds with no Riemannian smoothings}, Duke Math. Journal 161 (2012), 1-28. 

\bibitem{DJ} M. Davis and T. Januszkiewicz, {\em Hyperbolization of polyhedra}, J. Differential Geometry 34 (1991), 347-388.


\bibitem{ES} S. Eilenberg and N. E. Steenrod, {\em Foundations of algebraic topology}, Princeton University Press (1952)

\bibitem{HK} C. Hruska and B. Kleiner, {\em Hadamard spaces with isolated flats} (with an appendix by the authors and M. Hindawi), Geom. Topol. 9 (2005), 1501-1538

\bibitem{MR} J. Martin, D. Rolfsen, {\em Homotopic arcs are Isotopic}, Proceedings of the American Mathematical Society Vol. 19, No. 6 (Dec., 1968), pp. 1290-1292

\bibitem{Mil1} J. Milnor, {\em Link Groups}, The Annals of Mathematics, Second Series, Vol. 59, No. 2 (Mar., 1954), pp. 177-195

\bibitem{Mil2} J. Milnor, {\em Topology from the Differential Viewpoint}, Princeton University Press, New Jersey, 1997

\bibitem{M} J. Munkres, {\em Obstructions to the smoothing of piecewise-differentiable homeomorphisms}, Bull. Amer. Math. Soc. Volume 65, Number 5 (1959), 332-334.


\bibitem{S} B. Sathaye, {\em Link homotopic but not isotopic}, arXiv: 1601.05292 [math.GT] 

\bibitem{Sc} H. Schubert, {\em Knoten und Vollringe} (German), Acta Math. 90 (1953), 131-286

\bibitem{St} D. A. Stone. {\em Geodesics in piecewise linear manifolds}, Trans. Amer. Math. Soc. 215 (1976), 1-44

\bibitem{W} G. Walsh, {\em Great Circle Links in the three-sphere}, Birkhauser 1985.
[Thesis]



\end{thebibliography}
\end{document}